\pgfplotsset{compat=1.15}
\numberwithin{equation}{section}
\def\@tocline#1#2#3#4#5#6#7{\relax
  \ifnum #1>\c@tocdepth 
  \else
    \par \addpenalty\@secpenalty\addvspace{#2}%
    \begingroup \hyphenpenalty\@M
    \@ifempty{#4}{%
      \@tempdima\csname r@tocindent\number#1\endcsname\relax
    }{%
      \@tempdima#4\relax
    }%
    \parindent\z@ \leftskip#3\relax \advance\leftskip\@tempdima\relax
    \rightskip\@pnumwidth plus4em \parfillskip-\@pnumwidth
    #5\leavevmode\hskip-\@tempdima
      \ifcase #1
       \or\or \hskip 1em \or \hskip 2em \else \hskip 3em \fi%
      #6\nobreak\relax
    \hfill\hbox to\@pnumwidth{\@tocpagenum{#7}}\par
    \nobreak
    \endgroup
  \fi}
\title[]{Sobolev differentiability properties of logarithmic modulus of real analytic functions}           
\author[]{Ziming Shi} 
\author[]{Ruixiang Zhang} 
\address{Ziming Shi (Corresponding author), Department of Mathematics,
   University of California - Irvine, Irvine, CA, 92697}
\email{zimings3@uci.edu}
\address{Ruixiang Zhang, Department of Mathematics,
	University of California - Berkeley, Berkeley, CA, 94720}
\email{ruixiang@berkeley.edu} 
\keywords{logarithmic singularity, real analytic functions, o-minimality,  Łojasiewicz inequality}    
\subjclass[2020]{Primary 26D10; Secondary 26E05, 03C64}  
\newcommand{\dist}{\operatorname{dist}}
\newcommand{\supp}{\operatorname{supp}}
\newcommand{\loc}{\mathrm{loc}}
\newtheorem{thm}{Theorem}[section]
\newtheorem{cor}[thm]{Corollary} 
\newtheorem{prop}[thm]{Proposition}
\newtheorem{lemma}[thm]{Lemma}
\theoremstyle{definition}
\newtheorem{defn}[thm]{Definition}
\newtheorem{exmp}[thm]{Example}
\newtheorem{ques}[thm]{Question}
\theoremstyle{remark}
\newtheorem{rem}[thm]{Remark}
\newtheorem*{clm}{Claim}
\newtheorem*{ack}{Acknowledgment}
\renewcommand{\th}[1]{\begin{thm}\label{#1}}
	\renewcommand{\eth}{\end{thm}}
\newcommand{\co}[1]{\begin{cor}\label{#1}}
	\newcommand{\eco}{\end{cor}}
\newcommand{\pr}[1]{\begin{prop}\label{#1}}
	\newcommand{\epr}{\end{prop}}
\newcommand{\df}[1]{\begin{defn}\label{#1}}
	\newcommand{\edf}{\end{defn}}
\newcommand{\ex}[1]{\begin{exmp}\label{#1}} 
	\newcommand{\eex}{\end{exmp}}
\newcommand{\qu}[1]{\begin{ques}\label{#1}}
	\newcommand{\equ}{\end{ques}}  
\newcommand{\mk}{\begin{rem}}
	\newcommand{\emk}{\end{rem}}
\newcommand{\cl}{\begin{clm}}
	\newcommand{\ecl}{\end{clm}} 
\newcommand{\ac}{\begin{ack}}
	\newcommand{\eac}{\end{ack}} 
\newcommand{\ga}{\begin{gather}}
\newcommand{\ega}{\end{gather}}
\newcommand{\gan}{\begin{gather*}}
\newcommand{\egan}{\end{gather*}}
\newcommand{\al}{\begin{gngn}}
	\newcommand{\eal}{\end{align}}
\newcommand{\aln}{\begin{align*}}
\newcommand{\ealn}{\end{align*}}
\newcommand{\eq}[1]{\begin{equation}\label{#1}}
\newcommand{\eeq}{\end{equation}}
\newcommand{\pa}{\partial{}}
\newcommand{\na}{\nabla}
\newcommand{\sm}{\setminus}
\newcommand{\seq}{\subseteq}
\newcommand{\B}{\mathbb{B}} 
\newcommand{\R}{\mathbb{R}} 
\newcommand{\N}{\mathbb{N}}
\newcommand{\V}{\mathcal{V}}
\newcommand{\U}{\mathcal{U}} 
\newcommand{\C}{\mathcal{C}} 
\newcommand{\mc}{\mathcal}
\newcommand{\tit}{\textit} 
\newcommand{\0}{\mathbf{0}}
\newcommand{\ov}{\overline}
\newcommand{\wti}{\widetilde}
\newcommand{\codim}{\operatorname{codim}}
\newcommand{\dbar}{\overline\partial}
\newcommand{\sign}{\operatorname{sign}}
\newcommand{\all}{\alpha}
\newcommand{\del}{\delta}
\newcommand{\Del}{\Delta}
\newcommand{\var}{\varphi}
\newcommand{\ve}{\varepsilon}
\newcommand{\om}{\omega}
\newcommand{\Om}{\Omega}
\newcommand{\thh}{\theta}
\newcommand{\La}{\Lambda}
\newcommand{\gm}{\gamma}
\newcommand{\Gm}{\Gamma}
\newcommand{\si}{\sigma}
\newcommand{\re}[1]{(\ref{#1})}
\newcommand{\rl}[1]{Lemma~\ref{#1}}
\newcommand{\rc}[1]{Corollary~\ref{#1}}
\newcommand{\rp}[1]{Proposition~\ref{#1}}
\newcommand{\rt}[1]{Theorem~\ref{#1}}
\newcommand{\rd}[1]{Definition~\ref{#1}}
\newcommand{\nn}{\nonumber}
\newcommand{\nid}{\noindent}
\newcounter{pp}
\newcommand{\bpp}{\begin{list}{$\hspace{-1em}\alph{pp})$}{\usecounter{pp}}}
	\newcommand{\epp}{\end{list}}
\newcounter{ppp}
\newcommand{\bppp}{\begin{list}{$\hspace{-1em}(\roman{ppp})$}{\usecounter{ppp}}}
	\newcommand{\eppp}{\end{list}}
\newcommand{\Ac}{\mathcal{A}}
\newcommand{\Cb}{\mathbb{C}}
\newcommand{\Fc}{\mathcal{F}}
\newcommand{\Oc}{\mathcal{O}}
\newcommand{\Sc}{\mathcal{S}}
\newcommand{\Uc}{\mathcal{U}}
\newcommand{\Wc}{\mathcal{W}}
\newcommand{\blb}{\bigl[} 
\newcommand{\brb}{\bigr]}
\begin{document}
	\definecolor{rvwvcq}{rgb}{0.08235294117647059,0.396078431372549,0.7529411764705882}

\begin{abstract} 
Let $f$ be the germ of a real analytic function at the origin in $\mathbb{R}^n $ for $n \geq 2$, and suppose the codimension of the zero set of $f$ at $\mathbf{0}$ is at least $2$. We show that $\log |f|$ is $W^{1,1}_{\operatorname{loc}}$ near $\mathbf{0}$. In particular, this implies the differential inequality $|\nabla f |\leq V |f|$ holds with $V \in L^1_{\operatorname{loc}}$.   
\end{abstract} 

\maketitle

	
\section{Introduction}

\subsection{Statement of the main results} 
The main goal of this paper is to prove the following results on the log singularity for real analytic functions. 
\begin{thm} \label{Thm::W11_intro} 
Let $f$ be the germ of a real analytic function at the origin in $\R^n$, $n \geq 2$. Suppose that the codimension of the zero set of $f$ at $\0$ (denoted $\codim_{\0} (Z_f)$) is at least $2$. Then there exists a small neighborhood $U$ of $\0$ such that $\log |f| \in W^{1,1} (U)$.
\end{thm}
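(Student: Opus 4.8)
The plan is to split the assertion $\log|f|\in W^{1,1}(U)$ into two local $L^1$ bounds. Write $g=\log|f|$, a real-analytic function on $U\setminus Z_f$ with $\nabla g=\nabla f/f$ there. It suffices to show, for $U$ small, that (i) $\log|f|\in L^1(U)$ and (ii) $V:=|\nabla f|/|f|\in L^1(U)$. Granting these, set $g_\varepsilon=\tfrac12\log(f^2+\varepsilon^2)\in C^\infty(U)$; then $g_\varepsilon\to g$ pointwise a.e., and in $L^1(U)$ since $|g_\varepsilon|\le|g|+C$ near $\0$; moreover $\nabla g_\varepsilon=\dfrac{f\,\nabla f}{f^2+\varepsilon^2}\to\nabla f/f$ pointwise a.e. with $|\nabla g_\varepsilon|\le V$, so by dominated convergence $\nabla g_\varepsilon\to\nabla f/f$ in $L^1(U)$. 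Passing to the limit in $\int g_\varepsilon\,\partial_j\varphi=-\int(\partial_j g_\varepsilon)\varphi$ yields $Dg=\nabla f/f\in L^1(U)$, i.e. $g\in W^{1,1}(U)$ with no singular part supported on $Z_f$. This is the step that upgrades the pointwise identity $\nabla g=\nabla f/f$ to a genuine Sobolev statement.

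Claim (i) is classical and needs no codimension hypothesis. By Hironaka's resolution there is a proper real-analytic modification $\pi\colon\widetilde U\to U$, a finite composition of blow-ups along smooth centers, with $f\circ\pi=u_0\,y^\alpha$ in suitable local coordinates, $u_0$ a non-vanishing analytic unit and $y^\alpha$ a monomial; then $\bigl|\log|f\circ\pi|\bigr|\le\sum_j\alpha_j\bigl|\log|y_j|\bigr|+C$ is locally integrable, $|\det D\pi|$ is locally bounded, and $\int_U\bigl|\log|f|\bigr|\,dx=\int_{\widetilde U}\bigl|\log|f\circ\pi|\bigr|\,|\det D\pi|\,dy<\infty$. (Equivalently one may invoke the {\L}ojasiewicz inequality $|f(x)|\ge c\,\dist(x,Z_f)^N$.)

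The core is (ii), and here the codimension hypothesis enters. Fix a resolution $\pi$ as above with exceptional divisor $\bigcup_jE_j$ of simple normal crossings. Since $\codim_{\0}(Z_f)\ge2$, the set $Z_f$ has no codimension-one component near $\0$, so every prime divisor $E$ of $\widetilde U$ with $\ord_E(f\circ\pi)\ge1$ is exceptional, lying over $Z_f$; because every blow-up center has codimension $\ge2$, its discrepancy $\delta_E:=\ord_E(\det D\pi)$ satisfies $\delta_E\ge1$. From $\nabla_y(f\circ\pi)=(D\pi)^{\mathsf T}(\nabla_x f)\circ\pi$ one obtains
\[
\frac{(\nabla_x f)\circ\pi}{f\circ\pi}\,\det D\pi=\operatorname{adj}(D\pi)^{\mathsf T}\nabla_y\log|f\circ\pi|=\operatorname{adj}(D\pi)^{\mathsf T}\Bigl(\sum_j\frac{\alpha_j}{y_j}\,\mathbf e_j+\nabla_y\log|u_0|\Bigr),
\]
$\mathbf e_j$ the $j$-th standard basis vector, so that after the change of variables
\[
\int_U V\,dx=\int_{\widetilde U}\bigl|(\nabla_x\log|f|)\circ\pi\bigr|\,|\det D\pi|\,dy\lesssim\sum_{j:\,\alpha_j\ge1}\alpha_j\int_{\widetilde U}\frac{|A_j(y)|}{|y_j|}\,dy+C,
\]
where $A_j$ is the $j$-th row of $\operatorname{adj}(D\pi)$ and $C$ absorbs the bounded real-analytic term coming from $\nabla_y\log|u_0|$. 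Hence it suffices to prove the \emph{Key Lemma}: for every exceptional prime divisor $E_j=\{y_j=0\}$ of $\pi$ one has $\ord_{E_j}(A_j)\ge1$ — equivalently $\ord_{E_j}(f\circ\pi)\le\min_k\ord_{E_j}\bigl((\partial_k f)\circ\pi\bigr)+\delta_{E_j}$, i.e. the drop in order of $\nabla f$ under $\pi$ is controlled by the Jacobian's vanishing order. Granting it, in each normal-crossings chart $|A_j|/|y_j|$ is dominated by a monomial with non-negative exponents, hence locally bounded and integrable, which gives (ii) and the theorem.

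The principal difficulty is the Key Lemma. For a single blow-up of a smooth center of codimension $c\ge2$ it is explicit: in the standard chart $\pi(y)=(y_1,y_1y_2,\dots,y_1y_c,y_{c+1},\dots,y_n)$ one has $\det D\pi=y_1^{\,c-1}$ and the exceptional row of $\operatorname{adj}(D\pi)$ equals $(y_1^{\,c-1},0,\dots,0)$, divisible by $y_1$. For general $\pi$ I would induct on the number of blow-ups, factoring $\pi=\pi'\circ\sigma$ with $\sigma$ the last blow-up and using $\operatorname{adj}(D\pi)=\operatorname{adj}(D\sigma)\cdot\bigl(\operatorname{adj}(D\pi')\circ\sigma\bigr)$: when $E_j$ is the exceptional divisor of $\sigma$ the factor $\operatorname{adj}(D\sigma)$ already supplies the needed power of $y_j$, and when $E_j$ is the strict transform of an exceptional divisor of $\pi'$ one reduces, in local coordinates adapted to $\sigma$, to the inductive hypothesis together with the single-blow-up computation. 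Making this chart bookkeeping airtight — keeping track of how the exceptional coordinates, the discrepancies, and the divisibility propagate through every chart of every blow-up — is the technical heart of the proof; the codimension-$\ge2$ hypothesis is used exactly to guarantee $\delta_{E_j}\ge1$ for all divisors in the support of $\pi^*(f)$, which is what fails when $Z_f$ has a codimension-one component.
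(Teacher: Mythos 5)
Your route is genuinely different from the paper's. The paper never resolves singularities: it bounds $\int_{[-1,1]^n}|\partial_1\log|f||$ by integrating along lines, invoking the o-minimal monotonicity theorem \emph{with parameters} to split each line into a uniformly bounded number of monotone pieces, and then controls the resulting boundary values $\log\inf_{x_1}|f(\cdot,x')|$ by combining the {\L}ojasiewicz distance inequality with volume bounds for $\varepsilon$-neighborhoods of tame sets of dimension $\le n-2$; the passage from the pointwise to the distributional gradient is done with Harvey--Polking cutoff functions rather than your regularization $g_\varepsilon=\tfrac12\log(f^2+\varepsilon^2)$. Your decomposition into (i), (ii) and the removability step is the same as the paper's in spirit, and your limiting argument for the last step is correct and arguably cleaner (it needs only that $Z_f$ is Lebesgue-null and the domination $|\nabla g_\varepsilon|\le V$, whereas the paper needs $f\in L^p$ for $p>2$ and a Hausdorff-measure estimate). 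Step (i) and the change-of-variables reduction of (ii) to the Key Lemma are also sound. What each approach buys: yours concentrates all the difficulty into one algebraic statement about the resolution map, while the paper's avoids resolution entirely at the cost of the o-minimal machinery.

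The gap is the Key Lemma itself. You verify it only for a single blow-up, and the induction you sketch does not reduce cleanly to the inductive hypothesis: writing $\pi=\pi'\circ\sigma$ and using $\operatorname{adj}(D\pi)=\operatorname{adj}(D\sigma)\cdot\bigl(\operatorname{adj}(D\pi')\circ\sigma\bigr)$, the $j$-th row of $\operatorname{adj}(D\pi)$ is the $j$-th row of $\operatorname{adj}(D\sigma)$ paired against \emph{all} rows of $\operatorname{adj}(D\pi')\circ\sigma$, not only the row corresponding to the divisor of which $E_j$ is the strict transform; the off-diagonal contributions must also be shown to vanish on $E_j$, and as written this is an unproven assertion rather than bookkeeping. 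The lemma is true, however, and admits a chart-free proof that closes the gap: the entries of the $j$-th row of $\operatorname{adj}(D\pi)$ are, up to sign, the $(n-1)\times(n-1)$ minors of $D\pi$ with the $j$-th column deleted, i.e.\ exactly the maximal minors of the differential of $\pi|_{E_j}$ along $E_j=\{y_j=0\}$. Since $E_j$ is exceptional, $\pi(E_j)$ has dimension $\le n-2$, so $D(\pi|_{E_j})$ has rank $\le n-2$ at every point of $E_j$ (rank $n-1$ at a point would force the image to contain an $(n-1)$-dimensional submanifold); hence all these minors vanish identically on $E_j$ and, $E_j$ being a smooth coordinate hypersurface, are divisible by $y_j$. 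With this argument substituted for the induction, your proof is complete; the hypothesis $\codim_{\0}Z_f\ge2$ enters exactly where you say it does, namely to guarantee that every divisor in the support of $f\circ\pi$ is exceptional rather than a strict transform of a hypersurface component of $Z_f$.
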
 
For definitions of dimension and codimension of the zero set of a real analytic function, see \rd{Def::dim} below. 

The following global version is an easy consequence of Theorem \ref{Thm::W11_intro}. 
\begin{cor} \label{Cor::W11_glob_intro}
Let $f$ be a real analytic function in a neighborhood of the closure of a bounded open subset $U$ of $\R^n$, $n \geq 2$. Suppose that the codimension of the zero set of $f$ in $U$ is at least $2$. Then $\log |f| \in W^{1,1}(U)$.  
\end{cor} 
\rc{Cor::W11_glob_intro} follows from \rt{Thm::W11_intro} by a simple local-to-global argument. 

We say that $g$ is in $W^{1,1}_\loc(\0)$ (or $L^p_{\loc}(\0)$) at $\0$ if there exists a neighborhood $U$ of $\0$ such that $g \in W^{1,1}(U)$ (or $L^p(U)$), and we say that $g$ has an isolated zero at $\0$ if there exists a neighborhood $U$ of $\0$ such that $Z_f \cap U = \{ \0 \}$, where $Z_f$ denotes the zero set of $f$. As a special case to \rt{Thm::W11_intro}, we conclude that $\log |f| \in W^{1,1}_\loc(\0)$ if $f$ has an isolated zero at the origin. 

\rt{Thm::W11_intro} (and accordingly \rc{Cor::W11_glob_intro}) is sharp up to both the integrability exponent and the codimension of the zero set. Indeed, take the function $f(x,y) = xy$ defined on $\R^2$, where the zero set is the union of $x$ and $y$ axis and has codimension 1. Let $U$ be any neighborhood of the origin, we have
\[
  \int_U \left| \na \log |f| \right|^p \, dV = \int_U \left| \frac{\na f}{f} \right|^p \, dV \approx 
  \int_U \left| \frac{x}{xy} \right|^p \, dV + \int_U \left| \frac{y}{xy} \right|^p \, dV
  = \int_U \frac{1}{|y|^p} \, dV +  \frac{1}{|x|^p} \, dV
\]
which is finite if and only if $p<1$. 
On the other hand, for any $\ve>0$ and $2 \leq n-d \leq n$, there exists a polynomial $f$ with $\codim_\0 Z_f = n-d$, and  $\frac{|\na f|}{f} \notin L^{1+\ve}_\loc (\0)$; see Example \ref{Ex::sharp}. We also show that (\rp{Prop::1-dim}) in one dimension, the only continuous function $f:(-1,1) \to \R$ in $W^{1,p}(-1,1)$ satisfying $f(0) = 0$ and $ \left| \frac{d}{dx} \log |f(x)| \right| = \frac{|f'(x) |}{|f(x)|} \in L^1(-1,1)$ is the zero function. 
 
Given any germ of a real analytic function $f$ at the origin with $f(\0) =0$ and $f$ not identically $0$, we can show that $\log |f| \in L^p_{\loc}(\0)$ for any $0 <p< \infty$; this is an easy consequence of the Weierstrass preparation theorem. The main difficulty lies in the derivative estimate: 
\[
 \int_{U \sm Z_f} |\na \log |f|| \, dV = \int_{U \sm Z_f} \frac{|\na f|}{|f|} \, dV < \infty, 
\]
for which the Weierstrass preparation theorem is no longer useful. We can think of the $L^1$ integrability as a result of the cancellation between the zeros of the function and its gradient near the singular points where $\na f = 0$. It should be noted that in many special cases one can do a lot better. Take the simple example $u = |x|^{2k}$ in $\R^n$, with $2 \leq 2k \leq n$, then $|\na u (x)| \approx |x|^{2k-1}$, and  $| \na \log |u(x)| | = \frac{|\na u(x) |}{ |u(x)| } \approx \frac{1}{|x|} \in L^p_\loc$, for any $p<n$. 
In fact, we prove the following result which provides a sharp upper bound for the integrability exponent. 

\begin{thm} \label{Thm::blow-up_intro}  
 Let $f$ be the germ of a real analytic function at the origin in $\R^n$, $n\geq 2$, with $f(\0) = 0$. Suppose $\codim_\0 Z_f = n-d \geq 1$ (i.e. $f$ is not identically $0$). Then for each (sufficiently small) neighborhood $\U$ of $\0$, 
 \[
   \int_{\U \sm Z_f} \left| \frac{\na f}{f} \right|^{n-d} \, dV = \infty. 
 \] 
  Furthermore, the integrability exponent $n-d$ is sharp in the sense that there exists a polynomial $f$ such that $ \codim_\0 Z_f = n-d$ and $|\na f|/f \in L^{p}_\loc(\0)$, for any $p<n-d$ (See Example \ref{Ex::sharp}.) 
\end{thm}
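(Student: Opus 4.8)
The plan is to reduce the divergence of $\int_\U |\na f/f|^{n-d}\,dV$ to an elementary one-dimensional computation carried out near a smooth point of the zero set $Z_f=\{f=0\}$. Since $\codim_\0 Z_f=n-d$, the germ $Z_f$ has dimension $d$ at $\0$. If $d=0$, set $p_0=\0$ (an isolated, hence smooth, point of $Z_f$); if $d\ge1$, the smooth locus of $Z_f$ (the set of points near which $Z_f$ is a real analytic $d$-dimensional submanifold) is dense in $Z_f$ near $\0$, so every neighborhood $\U$ of $\0$ contains a smooth point $p_0$ of $Z_f$. In a neighborhood of $p_0$ I would make a real analytic change of coordinates straightening $Z_f$ to a coordinate plane; on a compact polydisc this only multiplies $|\na f|$, $|f|$, and $dV$ by factors that are bounded above and below, so it does not affect whether $\int|\na f/f|^{n-d}\,dV$ is finite. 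We may therefore assume $p_0=\0$ and, writing $x=(x',x'')\in\R^{n-d}\times\R^{d}$, that $Z_f\cap Q=\{x'=0\}\cap Q$ on some polydisc $Q=\{|x'|<\eps\}\times\{|x''|<\eps\}\subseteq\U$ (when $d=0$, $x''$ is absent and this just asserts $Z_f\cap Q=\{\0\}$). It then suffices to show $\int_Q|\na f/f|^{n-d}\,dV=\infty$.

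On $Q$ I would slice along the lines normal to $Z_f$: write $x'=t\om$ with $t>0$ and $\om\in S^{n-d-1}$ (take $S^0=\{\pm1\}$ if $n-d=1$), so $dx'=t^{n-d-1}\,dt\,d\si(\om)$. Fix $(x'',\om)$ and set $g(t):=f(t\om,x'')$; this is real analytic in $t$, with $g(0)=f(0,x'')=0$ since $(0,x'')\in Z_f$, and $g(t)\ne0$ for $0<t<\eps$ since $(t\om,x'')\notin Z_f$. Hence $g\not\equiv0$, so $g(t)=t^{m}h(t)$ with $m=m(x'',\om)\ge1$ and $h$ real analytic and nowhere vanishing on $[0,\eps)$; then $h'/h$ is continuous, hence bounded, near $0$, so
\[
  \left|\frac{g'(t)}{g(t)}\right|=\left|\frac{m}{t}+\frac{h'(t)}{h(t)}\right|\ge\frac{1}{2t},\qquad 0<t<\de(x'',\om),
\]
for a suitable $\de(x'',\om)>0$. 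Since $g'(t)=\om\cdot\na_{x'}f(t\om,x'')$ and $|\om|=1$, we have $|g'(t)|\le|\na f(t\om,x'')|$, while $|g(t)|=|f(t\om,x'')|$; hence for every $(x'',\om)$,
\[
  \int_0^\eps\left|\frac{\na f(t\om,x'')}{f(t\om,x'')}\right|^{n-d}t^{n-d-1}\,dt\ge\int_0^{\de(x'',\om)}\left(\frac{1}{2t}\right)^{n-d}t^{n-d-1}\,dt=\frac{1}{2^{n-d}}\int_0^{\de(x'',\om)}\frac{dt}{t}=\infty.
\]
By the polar change of variables (Tonelli's theorem applies, the integrand being non-negative), $\int_Q|\na f/f|^{n-d}\,dV$ equals the integral of the left-hand side above over $\{|x''|<\eps\}\times S^{n-d-1}$; since that integrand is everywhere $+\infty$ and the set has positive measure, $\int_Q|\na f/f|^{n-d}\,dV=\infty$. (When $d=0$ this is exactly the slicing of $\R^n$ by lines through the origin.)

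For the sharpness I would take $f(x)=x_1^2+\cdots+x_{n-d}^2=|x'|^2$ on $\R^n$ (the function of Example~\ref{Ex::sharp}); its zero set $\{x'=0\}$ has codimension $n-d$ at $\0$. Here $|\na f(x)|=2|x'|$, so $|\na f|/|f|=2/|x'|$, and for a ball $B$ about $\0$,
\[
  \int_B\left|\frac{\na f}{f}\right|^{p}\,dV\approx\int_{|x'|<r}\frac{dx'}{|x'|^{p}}\approx\int_0^r\rho^{\,n-d-1-p}\,d\rho,
\]
which is finite precisely when $p<n-d$. Thus $|\na f|/|f|\in L^{p}_\loc(\0)$ for every $p<n-d$, so the exponent $n-d$ in the first assertion of the theorem cannot be lowered.

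The hard part—indeed, the only non-elementary point—will be the local geometry of real analytic sets: the fact that a positive-dimensional analytic germ has smooth points accumulating at $\0$, together with the existence of an analytic straightening of $Z_f$ near such a point. I expect that, and the routine verification that this straightening distorts the integrand and the volume element only by bounded factors (harmless, since we only care about finiteness versus infiniteness), to be the only steps requiring care; the transversal slicing, the reduction to the one-dimensional logarithmic derivative, and the $t^{n-d-1}$ bookkeeping are all straightforward.
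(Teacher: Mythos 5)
Your proposal is correct and follows essentially the same route as the paper: locate a smooth point of $Z_f$ of dimension $d$ near $\0$ (which exists by the definition of $\dim_\0 Z_f$), straighten $Z_f$ to a coordinate plane, slice transversally, and exploit the finite vanishing order of a one-variable real analytic function to get a non-integrable $1/t$ logarithmic derivative; the sharpness example $|x'|^2$ is also the paper's. The only difference is organizational: the paper first proves the full-exponent case $\int|\na f/f|^n=\infty$ by contradiction (Fubini in spherical coordinates plus an almost-every-ray argument) and then applies it to the transversal slices, whereas you run the radial slicing directly in the transversal variables and conclude by Tonelli, which is slightly cleaner since after straightening every slice function is automatically not identically zero.
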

We note that in \cite{Pan22}, it was proved that if $f$ is a non-constant locally Lipschitz function $f$ on an open set $\Om$ in $\R^n$ with $Z_f \cap \Om \neq \emptyset$, then $\int_{\Om \sm Z_f} |\na \log |f||^n\, dV = \infty$.  

To illustrate the above results, take the function $h = x^2y^2 + z^2$ in $\R^3$. Its zero set is the union of the $x$ and $y$ axes and therefore has codimension $2$. Our theorems imply that $\int_U |\na h| / |h| \, dV <\infty$ and $\int_U (|\na h| / |h|)^2 \, dV = \infty$ for any bounded open set $U$ containing $\0$ in $\R^3$. The first statement is not obvious. To see that $|\na h| / h$ is not $L^2$ integrable on $U$, we can use a simple direct argument. Assume that $U_\ve = \{ |x| \approx c, |y|, |z| \approx \ve \} \subset U$ for some 
$c>0$. Then $ \int_{U_\ve} (|\na h| / |h|)^2 \, dV$ is roughly $\ve^2 \cdot \ve^{-2} \approx 1$. The statement then follows by summing over $\ve$ in dyadic intervals approaching to $0$. 

The proof of \rt{Thm::W11_intro} relies on the ``finiteness" property of analytic functions and their zero sets (which are analytic sets). A particularly useful fact is that analytic sets can be decomposed locally into finitely many connected analytic manifolds. We shall generalize this phenomenon and adopt the viewpoint that analytic sets are (locally) definable in some o-minimal structure, and accordingly they are ``nice" and have limited complexity, or ``tame" in their geometry and topology. 

Another way to state our result is that if a real analytic germ $f$ satisfies the hypothesis given in \rt{Thm::W11_intro} on the codimension of its zero set, then $f$ satisfies the differential inequality $|\na f| \leq V |f|$, for $V \in L^1_\loc(\0)$.
Conversely, it is well-known that certain differential inequalities imply analytic-like properties. In fact, our original motivation comes from study of the unique continuation properties of the differential inequality
\eq{Lap_diff_ineq} 
  \left| \Del u \right| \leq A \left| u \right| + B| \na u|, \quad A \in L^p_{\loc}(\0), \; B \in L^q_\loc(\0), \; 0<p,q<\infty.   
\eeq
We say that the differential inequality \re{Lap_diff_ineq} satisfies the unique continuation property (UCP) at $\0$, if every solution that vanishes in a neighborhood of a point $\0$ vanishes identically. 
Much work have been done to find the optimal (minimal) values for $p$ and $q$ such that \re{Lap_diff_ineq} satisfies the UCP. See for example \cite{Wol95} for an exposition on this problem. 
We mention yet another related result by Gong-Rosay  \cite{G-R07}, which shows that if $f:\Om \to \Cb$ is a continuous map on some open set $\Om$ in $\Cb^n$ and $|\dbar f | \leq K|f|$ on $\Om \sm Z_f$ for some constant $K$, then $Z_f$ is a (complex) analytic set.

\subsection{An application to local invariants}
 First, we recall the Łojasiewicz gradient inequality. Let $f$ be the germ of a real analytic function at $\0$ such that $f(\0)=0$. There exists some $\beta \in (0,1)$ and a small neighborhood $\V$ of $\0$ such that 
\begin{equation} \label{L_ineq}  
  | \na f (x)| \geq c_\beta |f(x)|^\beta, \quad \text{for all $x \in \V$.}
\end{equation}
Here we may assume $\na f(\0)= 0$ as otherwise the inequality is trivial. 
The \emph{Łojasiewicz exponent of $f$ at $\0$}, denoted by $\beta_0$, is defined to be the infimum of all $\beta$ satisfying \re{L_ineq}. 

For any real analytic function $f$ with $f(\0)=0$, we define the \emph{singularity exponent of $f$ at $\0$}, denoted by $\all_0$, as the supremum of all $\all >0$ such that there exists a small neighborhood $\V$ of $\0$ with $\int_{\V} |f|^{-\all} < \infty$. 
By \rt{Thm::W11_intro}, and inequality \re{L_ineq}, if $\codim_\0 Z_f \geq 2$, then there exists a neighborhood $\U$ of the origin such that
\[
  \infty > \int_{\U} \left| \frac{\na f}{ f } \right| \, dV 
  \geq c_\beta \int_{\U} \frac{1}{|f|^{1-\beta}} \, dV
\] 
for any $\beta > \beta_0$. This implies that $(1-\beta) < \all_0$. 
Letting $\beta \to \beta_0$, we get $1- \beta_0 \leq \all_0$, or  
\[
\all_0 + \beta_0 \geq 1. 
\]  
We summarize the result in the following corollary:
\begin{cor}
  Let $f$ be the germ of a real analytic function at the origin in $\R^n$ with $f(\0) = 0$. Suppose $\codim_\0 Z_f \geq 2$. Let $\all_0$ and $\beta_0$ be the singularity exponent and the Łojasiewicz exponent of $f$ at $\0$, respectively. Then 
	\[
	\all_0 + \beta_0 \geq 1. 
	\] 
\end{cor}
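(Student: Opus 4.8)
The plan is to deduce the corollary by combining \rt{Thm::W11_intro} with the {\L}ojasiewicz gradient inequality~\re{L_ineq}; no new analytic input is required. Two preliminary remarks. First, the hypotheses $f(\0)=0$ and $\codim_\0 Z_f \ge 2$ force $\na f(\0)=0$ (otherwise, by the implicit function theorem, $Z_f$ would be a smooth hypersurface, of codimension $1$, near $\0$), so the inequality \re{L_ineq} is not vacuous. Second, the {\L}ojasiewicz inequality holds for some $\beta \in (0,1)$, hence the {\L}ojasiewicz exponent satisfies $\beta_0 < 1$; in particular we may choose $\beta$ with $\beta_0 < \beta < 1$, and then $1-\beta \in (0,1)$.

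Fix such a $\beta$. By the definition of $\beta_0$ there is a neighborhood $\V$ of $\0$ on which $|\na f(x)| \ge c_\beta |f(x)|^\beta$. By \rt{Thm::W11_intro} there is a neighborhood $U$ of $\0$ with $\log|f| \in W^{1,1}(U)$, i.e.\ $\frac{|\na f|}{|f|} \in L^1(U)$. On $\U := U \cap \V$ we combine the two:
\[
  \infty > \int_\U \left| \frac{\na f}{f} \right| \, dV \ \ge \ c_\beta \int_\U |f|^{\beta-1}\, dV \ = \ c_\beta \int_\U \frac{dV}{|f|^{1-\beta}},
\]
hence $\int_\U |f|^{-(1-\beta)}\, dV < \infty$. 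Since $1-\beta>0$ and $\U$ is a neighborhood of $\0$, the exponent $\all = 1-\beta$ is admissible in the definition of the singularity exponent, so $1-\beta \le \all_0$. Letting $\beta \to \beta_0^{+}$ gives $1-\beta_0 \le \all_0$, that is, $\all_0 + \beta_0 \ge 1$.

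I do not anticipate a genuine obstacle here: the entire difficulty has been absorbed into \rt{Thm::W11_intro}, and what remains is a one-line pointwise estimate followed by a limit. The only points to verify are the two preliminary remarks above — that $\na f(\0)=0$ under the stated hypotheses, and that $\beta_0 < 1$, so that the limiting step stays in the range where $1-\beta>0$ — and both are immediate.
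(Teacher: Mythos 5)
Your proof is correct and follows essentially the same route as the paper: combine \rt{Thm::W11_intro} with the {\L}ojasiewicz gradient inequality to get $\int_\U |f|^{-(1-\beta)} < \infty$ for each $\beta > \beta_0$, conclude $1-\beta \le \all_0$, and let $\beta \to \beta_0$. The two preliminary remarks you add (that $\na f(\0)=0$ under the hypotheses and that $\beta_0<1$) are correct sanity checks that the paper leaves implicit.
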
 

We point out that all of our results fail rather spectacularly if $f$ is only assumed to be $C^\infty$. Take $u_\ve (x) = e^{-|x|^{-\ve}}$ defined in $\R^n$, with $\ve>0$. Then $ \log u_\ve (x) = - |x|^{-\ve}$. It follows that for any $p >0$, $\log u_\ve \notin L^p_\loc(\0) $ whenever $\ve \geq n/p$.  
 
We use $x \lesssim y$ to mean that $x \leq Cy$ where $C$ is a constant independent of $x,y$, and we write $x \approx y$ if $x \lesssim y$ and $y \lesssim x$. For an open subset $\Om$ of $\R^n$, we denote by $C^\infty_c(\Om)$ the space of $C^\infty$ function with compact support in $\Om$. We denote the zero set of $f$ by $Z_f$, and we write the volume element in $\R^n$ as $dV$.   

\begin{ack}
The authors would like to thank Yifei Pan for many inspiring and helpful discussions. They also thank the referee for many valuable comments. 
\end{ack}

\section{Preliminaries} 
In this section we review some definitions and concepts used in later proofs. 

Let $\mc M$ be a connected $n$-dimensional analytic manifold and $\Uc$ be an open subset of $\mc M$. We denote by $ \mathcal{O}^\R_{\Uc}$ the ring of real analytic functions from $\Uc$ to $\R$. If $p \in \mc M$, we denote by $\Oc^\R_{\mc M,p}$ the set of real analytic germs at $p$. 
We denote the set of polynomials in $\R^n$ by $\R[x] = \R[x_1,\dots, x_n]$.
\begin{defn} 
 Let $A \subset \Oc_{\mc M}^\R$. We define the \emph{vanishing locus} of $A$, $V(A)$, to be the set of points: 
 \[
 V(A):= \{ x \in \mc M: f(x) = 0, \: \forall \: f \in A \}.
 \] 
\end{defn}
\begin{defn} 
 An \emph{algebraic subset of $\R^n$} is a set of the form $V(\Ac)$, where $\Ac \seq \R[x]$. A subset $X \subset \mc M$ is a \emph{(real) analytic subset} of $\mc M$ if $X$ is closed in $\mc M$ and, for all $x \in X$, there exists an open neighborhood $\mc{W} $ of $x$ in $\mc M$ and a finite collection $f_1, \cdots, f_j \in \mc{O}^{\R}_{\mc{W}}$ such that $\mc{W} \cap X = V (f_1, \cdots, f_j)$. 
\end{defn}
\begin{defn}
Let $X$ be an analytic subset of $\mc M$. A point $p \in X$ is called \emph{smooth, of dimension $d$}, if there exists an open neighborhood $\mc{W}$ of $p$ in $\mc M$ such that $\mc{W} \cap X$ is an analytic sub-manifold of $\mc{W}$ of dimension $d$. In other words, there exists $f_{d+1}, \dots, f_n \in \Oc_{\Wc}^{\R}$ such that $\Wc \cap X = V(f_{d+1}, \dots, f_n)$ and  $\na f_{d+1}(x), \cdots, \na f_n(x) $ are linearly independent at each $x \in \Wc$.  

We denote the set of smooth points of $X$ by $\mathring{X}$, and the set of smooth points of dimension $d$ by $\mathring{X}^{(d)}$. 
\end{defn}

\df{Def::dim} 
The \emph{dimension} (over $\R$), $\dim X$ of an analytic set $X \subset \mc M$ is the largest $m$ such that $\mathring{X}^{(m)}$ is non-empty. The \emph{dimension of $X$ at a point} $p \in X$, denoted by $\dim_p X$, is the largest $d$ such that $p$ is in the closure of $\mathring{X}^{(d)} $. We say that $X$ is \emph{pure-dimensional} if the dimension of $X$ at each point $p \in X$ is independent of $p$. 
The \emph{codimension $\codim(X)$ of an analytic set $X \subset \mc M$} is defined as $n-d$, where $d =\dim(X)$. The \emph{codimension of $X$ at a point $p \in X$} is defined as $n-d_p$, where $d_p := \dim_p X$.  
\edf  

Analytic sets can be partitioned into smooth sets, as the following result (see \cite{B-M88}) shows. 
\begin{prop}[Stratification of analytic sets] 
 Let $X$ be an analytic subset of $M$. Then there exists a collection $\{ \Ac_\all \}_\all$ of subsets $M$ such that 
\begin{itemize}
    \item 
    X is the disjoint union of the $\Ac_\all$; 
    \item 
    Each $A_\all$ is an analytic submanifold of $M$;   
    \item 
    (``Condition of the frontier") If $\Ac_\all \cap \ov {\Ac_\beta} \neq \emptyset$, then $\Ac_\all \seq \ov{\Ac_\beta}$ and $\dim \Ac_\all < \dim \Ac_\beta$;  
    \item
    $ \{ \Ac_{\all} \}_\all$ is locally finite.  
\end{itemize} 
\end{prop}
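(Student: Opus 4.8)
The plan is to adopt the o-minimal viewpoint announced in the introduction. Locally a real analytic set is definable in the o-minimal structure $\mathbb{R}_{\mathrm{an}}$ of restricted analytic functions, so the decomposition we want comes from the o-minimal stratification theorem applied in small charts and then glued together using the local finiteness of the analytic set; the same conclusion may be reached by the classical route of \cite{B-M88}, which I sketch alongside.

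\textbf{Local step.} Fix $p \in \mc M$ and a chart $\phi\colon \mathcal{W} \to \mathbb{R}^n$ around $p$ small enough that $\phi(\mathcal{W} \cap X) = V(f_1, \dots, f_j)$ with $f_1, \dots, f_j$ restricted analytic on $\phi(\mathcal{W})$, so that $\phi(\mathcal{W} \cap X)$ is $\mathbb{R}_{\mathrm{an}}$-definable. The stratification theorem for definable sets (a cell decomposition refined so as to satisfy the condition of the frontier) yields a finite partition of $\mathcal{W} \cap X$ into connected analytic submanifolds of $\mathcal{W}$ that satisfies the frontier condition; in $\mathbb{R}_{\mathrm{an}}$ the cells can be taken analytic.

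\textbf{Globalization.} Cover $\mc M$ by a locally finite family of such charts $\{\mathcal{W}_\nu\}$. On an overlap $\mathcal{W}_\nu \cap \mathcal{W}_\mu$ the two local stratifications need not agree, so one passes to a common refinement; since only finitely many strata meet any given compact set this produces a globally locally finite partition $\{\Ac_\all\}$ of $X$ into connected analytic submanifolds of $\mc M$, of which $X$ is the disjoint union. The frontier condition is inherited: if $\Ac_\all \cap \ov{\Ac_\bee} \neq \emptyset$ then, near any point of the intersection, both strata lie in one chart, where the local frontier condition forces $\Ac_\all \seq \ov{\Ac_\bee}$ locally, and a connectedness argument (again using local finiteness) propagates this to all of $\Ac_\all$; the dimension drop $\dim \Ac_\all < \dim \Ac_\bee$ follows since $\Ac_\bee$ is open and dense in $\ov{\Ac_\bee}$, so that $\Ac_\all \seq \ov{\Ac_\bee} \sm \Ac_\bee$ lies in a subanalytic set of dimension $< \dim \Ac_\bee$.

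\textbf{Classical alternative and the main obstacle.} The classical proof instead iterates the singular locus: with $X_0 = X$ and $X_{k+1} = \mathrm{Sing}(X_k)$, each $X_k$ is analytic with $\dim X_{k+1} < \dim X_k$, so the chain terminates at $\emptyset$; the differences $X_k \sm X_{k+1}$ are analytic submanifolds, and splitting them by dimension and into connected components gives a coarse partition, which one then refines to restore the frontier condition. In either approach the substantive point is this refinement: at each stage one must enlarge the ``bad locus'' just enough to absorb all the frontier behaviour that must be resolved, while keeping it of strictly smaller dimension (so the induction runs) and keeping the family locally finite (so the process terminates). A genuine real-analytic subtlety pushes the refinement into the broader class of subanalytic (equivalently, locally $\mathbb{R}_{\mathrm{an}}$-definable) sets: the closure of a connected component of the smooth locus --- and already the pure $k$-dimensional part $\ov{\mathring{X}^{(k)}}$ --- of a real analytic set need not be analytic, as shown by the real Whitney umbrella $\{x^2 = zy^2\}$, whose pure $2$-dimensional part fails to be analytic near the origin. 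This is exactly the machinery developed in \cite{B-M88}, which we invoke; granting it, the local step and the gluing above are routine.
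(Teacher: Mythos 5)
The paper does not prove this proposition: it is quoted as a known result from \cite{B-M88}, and your argument ultimately defers to the same reference, so there is no divergence of substance. Your surrounding sketch (local $\mathbb{R}_{\mathrm{an}}$-definability plus cell decomposition, or iteration of the singular locus, followed by a refinement restoring the frontier condition) is a fair account of the standard proofs, and you correctly isolate the genuine difficulty --- that closures of components of the smooth locus, as for the Whitney umbrella, leave the analytic category, so the refinement must be carried out among semianalytic/subanalytic sets, which is exactly what \cite{B-M88} supplies. The one point worth flagging in your o-minimal route is that ordinary cell decomposition only yields $C^k$ cells for finite $k$; to obtain analytic strata one needs the separate (true for $\mathbb{R}_{\mathrm{an}}$ but not for general o-minimal structures) theorem on analytic cell decomposition, whereas the classical semianalytic argument produces analytic strata directly.
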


We can also define the \emph{dimension of an analytic set $X$} by $\dim X = \max_k \dim \Ac_k$. The definition is independent of the stratification: $\dim X = d$ if and only if $X$ contains an open set homeomorphic to an open ball in $\R^d$, but not an open set homeomorphic to an open ball in $\R^n$, $n >d$. It is also clear that the definition agrees with that of the Hausdorff dimension. 

One result we will be using is the {\L}ojasiewicz distance inequality for analytic functions: 
\begin{prop} \label{Prop::L_dist_ineq}
 Let $g$ be a real analytic function in a neighborhood $U$ of the origin in $\R^n$. Then for any compact set $K$ in $U$, there exist constants $c>0, \all >0$ which depend only on $g$, such that 
\[ 
   |g (x) | \geq c \dist (x, Z_g)^{\all},   \quad x \in K. 
\] 
\end{prop}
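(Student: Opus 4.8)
\emph{Proof plan.}\ This is the classical {\L}ojasiewicz distance inequality; the quickest route is to cite it from the literature on real analytic geometry (see, e.g., \cite{B-M88}), but I will also sketch a derivation from the comparison form of the {\L}ojasiewicz inequality for subanalytic functions. Throughout assume $Z_g \neq \emptyset$, the statement being vacuous otherwise. The plan is to localize by compactness, and then reduce the appearance of the non-analytic distance function to a single application of the comparison inequality.

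First I would reduce to a local statement: it suffices to produce, for each $p \in K$, a neighborhood $W_p \ni p$ and constants $c_p,\alpha_p>0$ with $|g(x)| \geq c_p \dist(x,Z_g)^{\alpha_p}$ for $x \in W_p \cap K$. Passing from this to the global estimate on $K$ is a routine bookkeeping step using a finite subcover together with the facts that $\dist(\cdot,Z_g)$ is bounded on $K$ and bounded below by a positive constant on any compact set disjoint from $Z_g$. If $p \notin Z_g$, one takes $W_p$ with $\overline{W_p}\cap Z_g = \emptyset$: there $|g|$ is bounded below and $\dist(\cdot,Z_g)$ is bounded above, so the local estimate is immediate. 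The substantive case is $p \in Z_g$, where I fix $r$ with $\overline{B(p,r)}\subset U$ and take $W_p = B(p,r/2)$.

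For that case, set $S := Z_g \cap \overline{B(p,r)}$, a compact real analytic (hence semianalytic) set, and $d(x) := \dist(x,S)$. The key input is that $d$ is a \emph{continuous subanalytic} function: the distance to a compact semianalytic set is subanalytic, its graph being obtained from $S$ by intersections, complements, and projections of relatively compact sets, all of which preserve subanalyticity. Then I apply the {\L}ojasiewicz inequality in its subanalytic form --- for $\varphi$ real analytic and $\psi$ continuous subanalytic on a neighborhood of a compact $L$ with $\varphi^{-1}(0) \subseteq \psi^{-1}(0)$, there exist $c,\alpha_p>0$ with $|\psi(x)|^{\alpha_p} \leq c\,|\varphi(x)|$ on $L$ --- taking $\varphi = g$, $\psi = d$, and $L = \overline{B(p,r/2)} \subset B(p,r)$. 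The hypothesis holds because on $L$ one has $g(x)=0 \iff x \in Z_g\cap L \subseteq S \iff d(x)=0$, so $d(x)^{\alpha_p} \leq c\,|g(x)|$ on $L$. Finally, since $S \subseteq Z_g$ gives $d(x) = \dist(x,S) \geq \dist(x,Z_g)$, I obtain
\[
  |g(x)| \;\geq\; c^{-1} d(x)^{\alpha_p} \;\geq\; c^{-1}\dist(x,Z_g)^{\alpha_p}, \qquad x \in W_p \cap K,
\]
which is the desired local estimate.

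The main obstacle --- and the reason the elementary comparison of two analytic functions does not suffice --- is that $\dist(\cdot,Z_g)$ is not analytic, so one genuinely has to pass to the subanalytic category (equivalently, invoke the distance version of {\L}ojasiewicz's inequality directly). Everything else, namely the finite cover, replacing $Z_g$ by the compact piece $S$ when measuring distance, and the equality of zero sets on $L$, is routine.
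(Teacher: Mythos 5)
Your proposal is correct. Note, however, that the paper does not actually prove Proposition \ref{Prop::L_dist_ineq}: it is stated in the preliminaries as a known classical result (the {\L}ojasiewicz distance inequality) and used as a black box, so there is no "paper proof" to compare against. Your derivation --- localizing by compactness, replacing $Z_g$ by the compact piece $S = Z_g \cap \overline{B(p,r)}$, observing that $\dist(\cdot,S)$ is a continuous subanalytic function, and then invoking the comparison form of the {\L}ojasiewicz inequality for a real analytic $\varphi=g$ versus a continuous subanalytic $\psi=d$ with $\varphi^{-1}(0)\subseteq\psi^{-1}(0)$ --- is the standard modern route (essentially the argument in Bierstone--Milman), and the final step $\dist(x,Z_g)\le d(x)$ correctly converts the local bound into the stated one. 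The patching of finitely many local exponents $\alpha_p$ into a single $\alpha$ is indeed routine (take the maximum after normalizing so that $\dist(\cdot,Z_g)\le 1$ on $K$), and your isolation of the one genuinely non-elementary ingredient (subanalyticity of the distance function, which forces you out of the purely analytic category) is exactly right. One cosmetic remark: the constants necessarily depend on $K$ as well as on $g$, a dependence the paper's statement elides; your write-up implicitly gets this right since $c$ and $\alpha$ come from a finite subcover of $K$.
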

Analytic sets are in some sense ``finite" objects. The following proposition gives an important property of this nature. 
\begin{prop}[{\cite[Remark 7.3]{B-M88}}] 
 Let $X$ be an analytic subset of $M$. Then the family of connected components of $X$ is locally finite. 
\end{prop}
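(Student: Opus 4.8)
The plan is to split the statement into a soft topological reduction and a single hard geometric input. For the reduction: since $X$ is closed in $M$, every $p \notin X$ has a neighborhood disjoint from $X$, which meets no connected component of $X$; hence it suffices to produce, for each $p \in X$, a neighborhood $W$ of $p$ in $M$ meeting only finitely many connected components of $X$. In turn it is enough to show that each $p \in X$ has a \emph{connected} relative neighborhood $V$ in $X$: such a $V$ lies in the single connected component of $X$ containing $p$, and if $V = W \cap X$ with $W$ open in $M$, then $W$ meets exactly that one component. Thus the proposition reduces to the assertion that an analytic set is locally connected, i.e. every point has a connected relative neighborhood.

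For this assertion I would invoke one of the classical local structure theorems for analytic sets, which is where analyticity is genuinely used — the claim is false for arbitrary closed sets, and even for the $C^\infty$ ``analytic-like'' sets such as the sublevel sets of $e^{-|x|^{-\ve}}$ discussed in the introduction. Two clean options: (i) \emph{triangulability} (Łojasiewicz): a closed analytic subset of a manifold is locally triangulable, so a neighborhood of $p$ in $X$ is homeomorphic to a locally finite simplicial complex carrying $p$ to a vertex $v$, and the open star of $v$ is the desired connected neighborhood; or (ii) \emph{local conic structure}: for $p \in X$ there is $\varepsilon_0 > 0$ such that for $0 < \varepsilon < \varepsilon_0$, $X \cap B(p,\varepsilon)$ is homeomorphic to the open cone on $X \cap S(p,\varepsilon)$ with vertex $p$, hence contractible and in particular connected. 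Either one completes the proof.

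Alternatively, to stay closer to the tools already quoted, one can argue from the Stratification Proposition: choose a relatively compact neighborhood $U$ of $p$ meeting only the strata $\Ac_1,\dots,\Ac_k$ (taken connected), and show $X \cap U$ has finitely many connected components by induction on $\max_i \dim \Ac_i$, using the condition of the frontier to confine the accumulation of components of a top-dimensional stratum to the finitely many strictly lower-dimensional strata it abuts. I expect this to be the more delicate route: the subtlety is precisely that a single non-closed stratum, intersected with $U$, need not be connected, so one must control how its components cluster along $\overline{\Ac_i}\setminus\Ac_i$. In any approach, the real obstacle is not the point-set bookkeeping but securing the ``tameness'' input that analytic sets are locally nice near each of their points; once that is available, the rest is routine.
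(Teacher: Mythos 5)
Your argument is correct, but note that the paper does not actually prove this proposition: it is quoted verbatim from Bierstone--Milman \cite{B-M88} (Remark 7.3), where it is obtained from the stratification machinery for semianalytic sets, so there is no in-paper proof to match. Your route is a genuinely different and logically clean one. The reduction is sound: points off the closed set $X$ are trivial, and if $p\in X$ has a connected relative neighborhood $V=W\cap X$, then any component of $X$ meeting $W$ meets $V$, hence equals the component of $p$, so $W$ meets exactly one component --- a conclusion even stronger than local finiteness. Local connectedness of real analytic (indeed semianalytic) sets does follow from either {\L}ojasiewicz's triangulation theorem or the local conic structure theorem, both of which legitimately apply here; the open star of a vertex is star-shaped, hence connected, and the cone is contractible. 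What your approach buys is this stronger local statement, at the cost of invoking structure theorems at least as deep as the cited remark itself, so it is correct but not more elementary than the citation. You are also right to flag the stratification route as incomplete as sketched: writing $X\cap U$ as a finite union of connected strata $\mathcal{A}_i$ does not bound its components, since $\mathcal{A}_i\cap U$ need not be connected and controlling its components is again a local-finiteness statement of the same kind; but you did not rely on that route. One minor quibble: the sublevel sets of $e^{-|x|^{-\varepsilon}}$ are balls, hence connected, so they do not witness failure for $C^\infty$ functions; the right counterexamples are zero sets, which for $C^\infty$ functions can be arbitrary closed sets, e.g.\ a sequence of points accumulating at the origin.
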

As an immediate consequence, we obtain the following 
\begin{prop} \label{Prop::mono_anal}  
  Let $f:(a,b) \to \R$ is analytic, where $(a,b)$ is a bounded interval. For every $(a',b')$ such that $a'>a$ and $b' < b$, there exists $N <\infty$, and $a'=a_0 < a_1 < \cdots < a_N = b' $ such that $f$ is either constant or strictly monotone on each subinterval $(a_i, a_{i+1})$. 
\end{prop}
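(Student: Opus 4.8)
The plan is to deduce \rp{Prop::mono_anal} from the preceding proposition on local finiteness of connected components of an analytic subset. The key observation is that monotonicity of a $C^1$ function on a subinterval is controlled by the sign of its derivative, and both $f'$ and $f''$ are again real analytic on $(a,b)$, hence their zero sets are analytic subsets of $(a,b)$.

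First I would fix $(a',b')$ with $a < a' < b' < b$ and consider the analytic function $f'$. If $f'$ is identically zero on $(a,b)$ then $f$ is constant and we are done with $N=1$. Otherwise, the zero set $Z := Z_{f'} \cap [a',b']$ is an analytic subset of the open interval containing $[a',b']$; since $[a',b']$ is compact and the connected components of $Z_{f'}$ are locally finite, $Z$ is a finite set, say $Z = \{ t_1 < \cdots < t_m \}$ (a nonempty analytic subset of an interval that is not the whole interval consists of isolated points, because a connected subset of $\R$ with more than one point is an interval, on which $f'$ would then vanish identically, contradicting analyticity of $f'$ and $f' \not\equiv 0$). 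Adjoining the endpoints, set $a' = a_0 \le a_1 < \cdots < a_{N-1} \le a_N = b'$ to be $\{a', b'\} \cup Z$ listed in increasing order. On each open subinterval $(a_i, a_{i+1})$ the derivative $f'$ has no zero, hence by continuity $f'$ has constant sign there, so $f$ is strictly monotone on $(a_i, a_{i+1})$.

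The only point requiring a word of care is the reduction of ``nonempty proper analytic subset of an interval'' to ``finite set of isolated points on a compact subinterval'': a connected component of $Z_{f'}$ is either a point or a nondegenerate interval, and in the latter case $f'$ vanishes on a set with a limit point, forcing $f' \equiv 0$ on the connected interval $(a,b)$ by the identity theorem for real analytic functions. This contradicts $f' \not\equiv 0$. Hence every component is a singleton, and local finiteness of the components restricted to the compact set $[a',b']$ gives finitely many of them.

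I do not anticipate a genuine obstacle here; the statement is essentially a packaging of the identity theorem together with local finiteness of components of analytic sets, specialized to dimension one. The one bookkeeping subtlety is allowing possible coincidences among $a'$, $b'$ and the $t_j$ (e.g. if $f'(a') = 0$), which is handled by taking the $a_i$ to be the increasing enumeration of the finite set $\{a',b'\} \cup Z$ and discarding repetitions, so that all the subintervals $(a_i, a_{i+1})$ are nonempty.
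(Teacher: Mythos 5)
Your proof is correct and follows exactly the route the paper intends: the paper gives no explicit proof, stating only that the proposition is "an immediate consequence" of the local finiteness of connected components of analytic sets, and your argument (apply that fact to $Z_{f'}$, rule out nondegenerate components via the identity theorem, use compactness of $[a',b']$, and read off constant sign of $f'$ on the complementary subintervals) is the natural fleshing-out of that remark. No gaps.
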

These properties of analytic sets can be better understood using the theory of o-minimal structures, a concept we recall in the next section.  

\subsection{o-minimality and tame geometry}  
For most of the definitions and results in this section we refer the reader to the book \cite{vdD98}. 
\begin{defn}
  A structure on $\R$ is a sequence $\Sc = \{ \Sc_n\}_{n \in \N}$ such that for each $n$: 
  \begin{enumerate}
      \item  
      $\Sc_n$ is a Boolean algebra of subsets of $\R^n$, that is, $\Sc$ is a collection of subsets of $\R^n$, $ \emptyset \in \Sc$, and if $A,B \in \Sc$, then $A \cup B \in \Sc$ and $\R^n \sm A \in \Sc$; 
      \item 
      $A \in \Sc_n$ $\implies$ $A \times \R \in \Sc_{n+1}$ and $\R \times A \in \Sc_{n+1}$; \item 
      The diagonals $\Del_{ij}:= \{ (x_1, \dots,x_n) \in \R^n: x_i=x_j \} \in \Sc_n$, for $1 \leq i < j \leq n$;  
      \item
      $A \in S_{n+1}$ $\implies$ $\pi(A) \in S_n$, where $\pi: \R^{n+1} \to \R^n$ is the usual projection map; 
        \item 
      $\{(x,y) \in \R^2: x<y \} \in \Sc_2$.  
      \item
      $\Sc_3$ contains the graphs of addition and multiplication. 
    \end{enumerate} 
    $\Sc$ is called \emph{o-minimal} if it satisfies the following additional axiom:  
   \begin{quote}
        The sets in $\Sc_1$ consists of exactly the finite unions of intervals and points.  
   \end{quote}
 
\end{defn}
Fix an o-minimal structure $\Sc$. Let $A \subset \R^m$ and $f: A \to \R^n$. We say $A$ is $\Sc$-$\emph{definable}$, or simply \emph{definable} when the underlying structure $\Sc$ is clear, if $A \in \Sc_m$; we say the \emph{map $f$ is definable} if its graph $\Gm(f) \subset \R^{m+n}$ is definable. If $f$ is definable, then the domain $A$ of $f$ and its image $f(A)$ are also definable.  

A structure is usually constructed as follows. Consider a family of functions, denoted by $F$, and take the smallest structure containing the graphs of all the functions in $F$. When the family consists of all constant functions, i.e. $f = c$, $c \in \R$, and also the graphs of addition and multiplication in $\R^3$, we obtain the family of semialgebraic sets, which is an $o$-minimal structure. On the other hand, if $F$ ranges over all restricted analytic functions, we obtain the family of so-called globally subanalytic sets, denoted as $\Sc(\R_{an})$. Here we call $f:\R^n \to \R$ a \emph{restricted analytic function}, if there is an open subset $U$ containing $[-1,1]^n$ in $\R^n$, and an analytic function $g:U \to \R$ such that
\[
  f(x) = \begin{cases}
  g(x), &  \text{for $x \in [-1,1]^n$}; 
   \\ 
  \text{constant}, & \text{otherwise.}
  \end{cases}
\] 

The following result due to Gabrielov is important for our application. 
\begin{prop}[{\cite{Gab68}}]
 $S(\R_{an})$ is an o-minimal structure. 
\end{prop}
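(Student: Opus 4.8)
The plan is to prove directly that the globally subanalytic sets form a structure satisfying the o-minimality axiom; the single deep ingredient will be Gabrielov's \emph{theorem of the complement}, which I would invoke as a black box. By construction $\Sc(\R_{an})$ is the smallest structure containing the graphs of all restricted analytic functions, and (as the terminology used just above anticipates) this class coincides with the class $\Sc'$ of \emph{globally subanalytic} sets: those $X \seq \R^n$ whose image under the standard embedding $\R^n \hookrightarrow \mathbb{P}^n(\R)$ into real projective space is subanalytic in the compact real-analytic manifold $\mathbb{P}^n(\R)$. I would first check that $\Sc'$ is a structure, then verify the o-minimality axiom for it.

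\emph{Step 1: $\Sc'$ is a structure.} Closure under finite unions, the presence of the diagonals $\Del_{ij}$, and closure under taking cartesian products with $\R$ are immediate from the local definition of subanalyticity; the graphs of addition, multiplication, and the order relation are semialgebraic and hence globally subanalytic. The two substantial axioms are closure under projection $\pi \colon \R^{n+1} \to \R^n$ and closure under complement. For projection one uses that the image of a subanalytic set under a proper analytic map is subanalytic (Hironaka, Hardt, {\L}ojasiewicz), after arranging the compactifications so that the map induced on the projective completions is proper --- this is precisely the reason for passing to \emph{globally} subanalytic sets rather than to merely subanalytic ones. Closure under complement is Gabrielov's theorem of the complement: the complement of a subanalytic set is again subanalytic. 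Granting these, $\Sc'$ is a structure; moreover $\Sc(\R_{an}) = \Sc'$, since on one hand the graph of a restricted analytic function is globally subanalytic (over $[-1,1]^n$ it is a relatively compact semianalytic set, and over the complement it is semialgebraic), and on the other hand, by the local description of subanalytic sets as projections of relatively compact semianalytic sets together with the compactness of $\mathbb{P}^n(\R)$, every globally subanalytic set is built from finitely many restricted analytic graphs by Boolean operations and projections.

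\emph{Step 2: the o-minimality axiom.} Let $A \seq \R$ be definable, i.e.\ a globally subanalytic subset of $\R$, so that $A$ is a subanalytic subset of the compact manifold $\mathbb{P}^1(\R)$. At every point $p \in \mathbb{P}^1(\R)$ there is then a neighborhood on which $A$ is a finite Boolean combination of sets of the form $\{g = 0\}$ and $\{g > 0\}$ with $g$ real analytic near $p$; since a real-analytic function on an interval either vanishes identically or has only isolated zeros, $A$ is \emph{locally} a finite union of points and open intervals. By compactness of $\mathbb{P}^1(\R)$ finitely many such neighborhoods suffice, so $A$ is globally a finite union of points and intervals, and hence so is $A = A \cap \R$. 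This is exactly the o-minimality axiom, and therefore $\Sc(\R_{an}) = \Sc'$ is an o-minimal structure.

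\emph{Main obstacle.} Everything hinges on the theorem of the complement. Without it one obtains only that the projections of relatively compact semianalytic sets form a system closed under finite unions, products, and further projections, but \emph{not} obviously closed under complement --- and it is exactly this closure that makes each $\Sc'_n$ a Boolean algebra, and that forces the one-dimensional members to be \emph{finite} unions of points and intervals rather than merely locally finite ones. Gabrielov's proof, together with the later accounts of Hironaka, {\L}ojasiewicz, Bierstone--Milman, and Denef--van den Dries (the last obtaining it via quantifier elimination for restricted analytic functions with a division symbol, using Weierstrass preparation), all rest on resolution of singularities or Weierstrass-type normalization to control the fibers of analytic maps. I would not try to reprove this; in practice one cites \cite{Gab68} and treats the verification of the structure axioms and the one-variable argument above as the routine remainder.
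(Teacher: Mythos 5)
The paper does not actually prove this proposition; it is stated as a known deep result and attributed to Gabrielov, so there is no in-paper argument to compare against. Your outline is the standard derivation and it is sound: the identification of $\Sc(\R_{an})$ with the globally subanalytic sets, closure under projection via properness after a (semialgebraic or projective) compactification, and the isolation of the theorem of the complement as the one genuinely hard ingredient are exactly how this is done in the literature, and treating that theorem as a black box is entirely appropriate here.

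One soft spot worth flagging is Step 2. You assert that a globally subanalytic $A \seq \R$ is, near each point of $\mathbb{P}^1(\R)$, a finite Boolean combination of sets $\{g=0\}$ and $\{g>0\}$ with $g$ analytic --- that is the definition of a \emph{semianalytic} set, and for a general subanalytic set this local description is not available by definition (subanalytic sets are projections of semianalytic ones, and passing back to a Boolean description requires an argument). For subsets of the line the conclusion is true, but the cleaner and standard route is: a relatively compact subanalytic subset of $\R$ is the image under a proper projection of a relatively compact semianalytic set, which has finitely many connected components; hence $A$ has finitely many connected components, each of which, being a connected subset of $\R$, is a point or an interval. With that repair (or with an explicit appeal to the fact that one-dimensional subanalytic sets are semianalytic), your argument is complete modulo the cited black box.
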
 

From now on we fix some o-minimal structure and talk about definable (or tame) sets and maps respect to this structure. 
\rp{Prop::mono_anal} generalizes to definable functions of o-minimal structure. 
\begin{prop}[{\cite[Chapter 3, Theorem 1.2]{vdD98}}]  
  Let $f:(a,b) \to \R$ be a definable function on the interval. Then there are points $a_0 = a < a_1 < \cdots < a_N = b$ in $(a,b)$ such that on each subinterval $(a_j, a_{j+1})$, the function is either constant, or strictly monotone. 
\end{prop}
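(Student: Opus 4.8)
The plan is to deduce everything from two facts: (i) the o-minimality axiom in the form that a definable subset of $\R$ is a finite union of points and open intervals, hence is either finite or contains an open interval; and (ii) the definable category is closed under first-order formulas with real parameters, which follows from the Boolean and projection axioms. Using (ii), for each $x\in(a,b)$ the local conditions ``$f$ is constant on some neighborhood of $x$'', ``$f$ is strictly increasing on some neighborhood of $x$'', ``$f$ is strictly decreasing on some neighborhood of $x$'', and ``$f$ is continuous at $x$'' all cut out definable subsets of $(a,b)$.

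First I would reduce the statement to the following \emph{Main Lemma}: every open subinterval $I_0\subseteq(a,b)$ contains a subinterval on which $f$ is continuous and either constant or strictly monotone. Granting it, let $G\subseteq(a,b)$ be the set of points having a neighborhood on which $f$ is continuous and constant-or-strictly-monotone. Then $G$ is definable and open, and the Main Lemma makes it dense, so $(a,b)\setminus G$ is a definable set with empty interior, hence finite; listing it as $a_1<\cdots<a_{N-1}$ and setting $a_0=a$, $a_N=b$, we are left to check that on each component $(a_{j-1},a_j)\subseteq G$ the behavior is globally of one type. This is a routine connectedness argument: the three pairwise disjoint open sets ``locally constant'', ``locally strictly increasing'', ``locally strictly decreasing'' cover the connected interval $(a_{j-1},a_j)$, so one of them is all of it; and local constancy (resp. local strict monotonicity, resp. local continuity) on a connected interval globalizes by a standard compactness-and-chaining argument.

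The heart of the proof is the Main Lemma. Fix $I_0$. If $f$ attains some value on a subset of $I_0$ with nonempty interior, $f$ is constant on a subinterval and we are done; otherwise every fiber $f^{-1}(r)\cap I_0$ has empty interior, hence is finite, so each $x\in I_0$ has a punctured neighborhood avoiding the value $f(x)$. Then the definable sets $\{y<x:f(y)>f(x)\}$ and $\{y<x:f(y)<f(x)\}$ cover a small left-interval of $x$ up to finitely many points, so one of them contains $(x-\varepsilon,x)$ outright; likewise on the right. This assigns to each $x\in I_0$ one of four types — increasing through $x$, decreasing through $x$, strict local maximum, strict local minimum — the four type-sets are definable and cover $I_0$ up to a finite set, so by (i) one of them contains a subinterval $J_1$. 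Now comes the one genuinely non-formal step: on $J_1$ the definable ``radius'' function $\tau(x)$, equal to the supremum of $\delta>0$ for which the relevant type condition holds on $(x-\delta,x+\delta)\setminus\{x\}$, is everywhere positive; since $\{x\in J_1:\tau(x)>1/n\}$ increases to $J_1$, for some $n$ it contains a subinterval, which after shrinking we may take to have length $<1/n$. On such a subinterval $J'$: if $J_1$ were of local-maximum type we would get $f(x)>f(y)$ for all distinct $x,y\in J'$, an absurdity, and symmetrically for local minima; while if $J_1$ is of increasing-through type the same shrinking forces $f(x)<f(y)$ whenever $x<y$ in $J'$, i.e. $f$ is strictly increasing on $J'$ (and likewise in the decreasing case). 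Finally, a strictly monotone definable function has only countably many discontinuities, and a countable definable set is finite, so on a last subinterval $f$ is both continuous and strictly monotone, which proves the Main Lemma.

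I expect the main obstacle to be precisely this last block — ruling out subintervals of strict local extrema and promoting ``locally of a fixed type'' to ``globally strictly monotone on a subinterval''; the radius-shrinking trick is what makes it go through, and it is the only place where more than bookkeeping with the finite-union-of-intervals dichotomy is required. Everything else (the reduction via $G$, the globalization on components, the elimination of the constant-on-a-fat-set case) is formal.
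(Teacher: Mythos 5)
The paper does not prove this proposition at all: it is quoted verbatim as the Monotonicity Theorem from van den Dries's book \cite[Ch.~3, Thm.~1.2]{vdD98}, so there is no in-paper argument to compare against. Your proof is correct and is, in essence, the standard textbook proof: reduce to a ``Main Lemma'' asserting good behavior on a subinterval of every subinterval, deduce that the definable set of bad points has empty interior and is therefore finite, and globalize on each complementary component by connectedness. The two steps that genuinely require care are both handled properly: (a) the passage from ``one of finitely many definable type-sets covers an interval'' to ``$f$ is strictly monotone on a subinterval,'' which you resolve with the uniform-radius trick (some $\{x:\tau(x)>1/n\}$ contains an interval, which you shrink to length $<1/n$ so that each point's type condition applies to every other point, killing the local-extremum types and forcing global monotonicity on that piece); and (b) upgrading monotone to continuous-and-monotone via ``countable definable $\Rightarrow$ finite.'' The only cosmetic quibble is that your two left-hand sets actually cover the whole punctured left neighborhood (not just up to finitely many points), since the fiber of $f(x)$ is avoided there; the argument works either way. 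You also prove slightly more than the statement asks (continuity on each piece), which matches the full theorem in \cite{vdD98}.
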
  
For our proof we need a parameter version of the above result. We say that a function $f: \R \to \R$ changes monotinicity at a point $x_1$ if for some $x_0<x_1$ and $x_2>x_1$, $f$ changes from being strictly positive (resp. negative) on $(x_0, x_1)$ to strictly negative (resp. positive) on $(x_0,x_2)$. 
\begin{prop}[{\cite[Propostion 2.8]{B-G-Z-K_21}}] \label{Prop::monotonicity}  
  Let $S$ be an o-minimal structure and let $h:\R^m \times \R \to \R$ be a definable function. For $x \in \R^m$, let $N(x)$ denote the number of times the function $h(x,\cdot): \R \to \R$ changes monotonicity. Then $\sup_{x \in \R^m} N(x) <\infty$.  
\end{prop}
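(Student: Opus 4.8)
The plan is to reduce the uniform bound on $N(x)$ to the \emph{uniform finiteness theorem} for definable families, one of the standard corollaries of cell decomposition in o-minimal structures (see \cite{vdD98}, Chapter~3): if $C \subseteq \R^m \times \R$ is definable and every fibre $C_x := \{ t : (x,t) \in C \}$ is finite, then $\sup_x |C_x| < \infty$. The point is to produce such a set $C$ whose fibres are (or contain) the ``transition points'' of $h(x,\cdot)$, in a way that is manifestly definable.

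First I would record the elementary definable predicates. For $a < b$ and $x \in \R^m$, each of the statements ``$h(x,\cdot)$ is constant on $(a,b)$'', ``$h(x,\cdot)$ is strictly increasing on $(a,b)$'' and ``$h(x,\cdot)$ is strictly decreasing on $(a,b)$'' is expressed by a first-order formula over $S$ (quantifying over two points of $(a,b)$), hence is a definable condition on $(x,a,b)$. Call $t \in \R$ a \emph{regular point} of $h(x,\cdot)$ if there is $\ve > 0$ such that $h(x,\cdot)$ restricted to $(t-\ve, t+\ve)$ is constant or strictly monotone, and set
\[
   C := \{ (x,t) \in \R^m \times \R : t \text{ is not a regular point of } h(x,\cdot) \}.
\]
Then $C$ is definable, being cut out by a first-order formula built from the predicates above. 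For each fixed $x$, applying the non-parametric monotonicity theorem to the definable function $h(x,\cdot)$ on $\R$ (which is definably homeomorphic to a bounded interval, e.g.\ via $t \mapsto t/(1+|t|)$) yields a finite partition of $\R$ into open intervals on each of which $h(x,\cdot)$ is constant or strictly monotone; every interior point of such a piece is regular, so $C_x$ is contained in the finite set of partition endpoints, and in particular is finite.

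Now the uniform finiteness theorem gives some $K < \infty$ with $|C_x| \le K$ for all $x \in \R^m$. It remains to bound $N(x)$ in terms of $|C_x|$. Writing $C_x = \{ t_1 < \cdots < t_k \}$ with $k \le K$, the complement $\R \sm C_x$ is a union of $k+1$ open intervals $I_0, \dots, I_k$, and on each $I_j$ every point is regular. A short connectedness argument then finishes the job: the three subsets of $I_j$ consisting of points near which $h(x,\cdot)$ is locally constant, locally strictly increasing, and locally strictly decreasing, respectively, are pairwise disjoint, open, and cover $I_j$; since $I_j$ is connected exactly one is nonempty, and being locally constant throughout a connected interval forces constancy while being locally strictly monotone throughout forces strict monotonicity. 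Hence $h(x,\cdot)$ is constant or strictly monotone on each of the $k+1$ intervals $I_j$, so it changes monotonicity at most $k \le K$ times, and $\sup_{x \in \R^m} N(x) \le K < \infty$.

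The main obstacle is really the uniform finiteness input, whose proof runs through cell decomposition; as this is a standard structural feature of o-minimal structures I would take it as given. Everything else is routine: writing down the defining formula for $C$, invoking the single-variable monotonicity theorem fibrewise to see that the fibres of $C$ are finite, and the elementary upgrade from local to global monotonicity on each complementary interval.
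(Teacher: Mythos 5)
The paper does not prove this proposition at all --- it is quoted verbatim from the reference \cite{B-G-Z-K_21} --- so there is no internal argument to compare yours against. Judged on its own, your proof is correct and is the standard derivation: definability of the set $C$ of non-regular points (the ``locally constant / strictly increasing / strictly decreasing on $(a,b)$'' predicates are first-order over the structure, and o-minimal structures are closed under first-order definability with parameters, since projections implement existential quantifiers), finiteness of each fibre $C_x$ via the one-variable Monotonicity Theorem, the Uniform Finiteness Theorem from cell decomposition to get $|C_x|\le K$ uniformly, and the elementary connectedness argument upgrading local to global monotonicity on each complementary interval. The only inputs you take on faith --- uniform finiteness and the non-parametric monotonicity theorem --- are exactly the standard black boxes one is entitled to here (the paper itself invokes the non-parametric version from \cite{vdD98} without proof), and your local-to-global step is sound because your notion of regularity is the strong one (monotone on a whole neighborhood), so a chaining/compactness argument applies. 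The only cosmetic caveat is that the bound on $N(x)$ you extract is ``at most $K$ changes across $K+1$ pieces plus possible behavior at the $K$ exceptional points,'' so the honest uniform bound is something like $2K+1$ rather than $K$; this does not affect the conclusion $\sup_x N(x)<\infty$.
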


\begin{defn}
 We call a set belonging to an o-minimal structure a \emph{tame set}. 
\end{defn}
Next, we need the concept of \emph{cells}, which are non-empty tame sets of particularly simple form. They are defined inductively as follows: 
\begin{enumerate}
    \item 
 the cells in $\R$ are just the points $\{r\}$, and the intervals $(a,b)$; 
 \item 
 Let $\C \subset \R^n$ be a cell; if $f,g: \C \to \R$ are definable continuous functions such that $f < g$ on $\C$, then 
 \[ 
   (f,g) := \{ (x,r) \in \C \times \R: f(x) < r < g(x) \}
 \] 
 is a cell in $\R^{n+1}$. Moreover, given a definable continuous function $f,g: \C \to \R$, the graph of $f$, and the sets 
 \[
  (-\infty,g): = \{ (x,r) \in \C \times \R: r < g(x) \}, \quad (f, \infty):= \{ (x,r) \in \C \times \R: f(x)<r \} 
 \] 
 are cells in $\R^{n+1}$; finally $\C \times \R \subset \R^{n+1}$ is a cell. 
\end{enumerate}
It turns out every tame set can be decomposed into cells. 
\begin{prop}[Cell Decomposition, {\cite[Theorem 2.11, Chapter 3]{vdD98}}] 
  Every tame set $A \subset \R^m$ has a finite partition $A = \C_1 \cup \cdots \cup \C_\ell$ into cells $\C_i$. If $f: A \to \R^n$ is a definable map, this partition of $A$ can be chosen such that all restrictions $f|_{\C_i}$ are continuous. 
\end{prop}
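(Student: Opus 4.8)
The statement is a foundational theorem of o-minimal geometry, and I would prove it following \cite[Chapter 3]{vdD98}, by a simultaneous induction on the ambient dimension $m$ of two assertions: $(I_m)$, that every finite family of definable subsets of $\R^m$ admits a common refinement by a finite partition of $\R^m$ into cells; and $(II_m)$, that for every definable function $f\colon A\to\R$ with $A\seq\R^m$ definable there is a finite partition of $A$ into cells on each of which $f$ is continuous. Taking $A_1=\cdots=A_k=A$ in $(I_m)$ and combining with $(II_m)$ yields precisely the stated theorem. For the base case $m=1$, statement $(I_1)$ is immediate from the o-minimality axiom (a definable subset of $\R$ is a finite union of points and open intervals, and a common refinement of finitely many of these is again of this form), while $(II_1)$ is the Monotonicity Theorem quoted above: off a finite set, a definable one-variable function is locally constant or strictly monotone and continuous.

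For the inductive step $(I_{m+1})$: given definable $A_1,\dots,A_k\seq\R^{m+1}$, let $\pi\colon\R^{m+1}\to\R^m$ be the projection forgetting the last coordinate. Each fiber $(A_i)_x=\{t\in\R:(x,t)\in A_i\}$ is a definable subset of $\R$, hence a finite union of points and intervals; the key input is that the number of these pieces is bounded uniformly in $x$, so that their endpoints are the values of finitely many definable functions $\xi_j$ defined on definable pieces of $\R^m$. Applying $(I_m)$ and $(II_m)$ to the domains of the $\xi_j$ and to the $\xi_j$ themselves produces a cell decomposition of $\R^m$ over each cell $\C$ of which the relevant endpoint functions are continuous and linearly ordered; their graphs and the bands between consecutive ones are then cells of $\R^{m+1}$ partitioning $\pi^{-1}(\C)$ compatibly with every $A_i$. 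Collecting over all $\C$ gives $(I_{m+1})$.

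For $(II_{m+1})$: apply $(I_{m+1})$ to $A$ to reduce to the case that $A$ is a single cell. If $A$ is the graph of a continuous definable function over a cell $\C\seq\R^m$, then $A$ is definably homeomorphic to $\C$ and $(II_m)$ applies. If $A$ is a band $(g,h)_{\C}$, then for each $x$ one applies the Monotonicity Theorem to $t\mapsto f(x,t)$; by \rp{Prop::monotonicity} the number of monotonicity changes is bounded uniformly in $x$, so the breakpoints are again finitely many definable functions of $x$. Refining $\C$ by $(I_m)$ and $(II_m)$ so that these breakpoint functions, together with the relevant one-sided limits of $f$, are continuous and appropriately ordered, then subdividing $A$ along the graphs of the breakpoint functions, reduces to bands on which $f(x,\cdot)$ is monotone or constant; a short argument combining continuity in $x$ on each cell with the one-variable conclusion then gives a cell partition of $A$ on which $f$ is continuous, proving $(II_{m+1})$ and closing the induction.

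The main obstacle is the \emph{uniform finiteness} phenomenon used at each inductive step --- that the number of connected components of the fibers of a definable family, and the number of monotonicity changes of a definable family of one-variable functions, are bounded uniformly in the parameter --- so that all the endpoint and breakpoint data assemble into finitely many honestly definable functions to which the lower-dimensional hypotheses can be applied. This is exactly where o-minimality (rather than mere definability in some structure) is essential; in the present setup the family-monotonicity statement \rp{Prop::monotonicity} already packages much of what is needed. The remaining effort is the careful but routine bookkeeping required to keep the cells, their defining functions, and the induced linear orders all definable and mutually compatible with the prescribed sets $A_i$.
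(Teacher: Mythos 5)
The paper does not prove this proposition; it is quoted verbatim from van den Dries \cite[Theorem 2.11, Chapter 3]{vdD98}, and your outline faithfully reproduces the standard simultaneous induction on $(I_m)$ and $(II_m)$ from that reference, with the monotonicity theorem as base case and uniform finiteness of fiber components driving the inductive step. The only cosmetic gap is that the stated result allows $f\colon A\to\R^n$ rather than $\R$-valued $f$, but this follows from your $(II_m)$ applied coordinatewise together with a common refinement via $(I_m)$.
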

A \emph{$C^k$-cell} in $\R^m$ is defined to be a cell which is also a $C^k$ submanifold of its ambient Cartesian space. 
\begin{prop}[Smooth Cell Decomposition, {\cite[p.~131]{vdD99}}] 
Let $k$ be a positive integer. Every tame set $A \subset \R^m$ admits a finite partition $A = \C_1 \cup \cdots \cup \C_\ell$ into $C^k$-cells. If $f: A \to \R$ is a definable map, then the partition can be chosen such that $f|_{\C_i}$ are $C^k$.    
\end{prop}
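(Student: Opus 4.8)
The plan is to run the same double induction on the ambient dimension $m$ that underlies the continuous Cell Decomposition Theorem quoted above, carrying along two coupled statements (with the smoothness order $k$ fixed throughout): $(\mathrm{C}_m)$, that every finite family of definable subsets of $\R^m$ admits a compatible finite decomposition of $\R^m$ into $C^k$-cells; and $(\mathrm{F}_m)$, that every definable $f\colon A\to\R$ with $A\seq\R^m$ tame becomes, after refining a suitable decomposition of $A$ into $C^k$-cells, of class $C^k$ on every piece. Granting these for all $m$, the proposition is exactly $(\mathrm{C}_m)$ together with the function clause of $(\mathrm{F}_m)$ (apply $(\mathrm{F})$ to the components of the given map $f$ and ask the decomposition to be compatible with the resulting cells). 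The one point not already present in the continuous theorem is an elementary lemma in one variable, which I isolate first.

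\emph{The one-variable lemma.} Let $f\colon(a,b)\to\R$ be definable. The set $D_1\seq(a,b)$ of points at which $f$ is not differentiable is definable --- ``$\lim_{t\to0}\bigl(f(x+t)-f(x)\bigr)/t$ exists'' is a first-order condition in the structure --- hence by o-minimality a finite union of points and open intervals. It contains no open interval $I$: by the Monotonicity Theorem $f$ is monotone or constant on some subinterval $J\seq I$, and a monotone (or constant) function is differentiable at some point of $J$, contradicting $J\seq I\seq D_1$. Thus $D_1$ is finite, and on $(a,b)\sm D_1$ the derivative $f'$ is again a definable function of one variable, to which the same argument applies. After $k$ iterations, $f$ is $C^k$ off a finite set; this gives $(\mathrm{F}_1)$, and $(\mathrm{C}_1)$ is trivial since $1$-dimensional cells are already as smooth as one wishes.

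\emph{Inductive step.} Assume $(\mathrm{C}_m)$ and $(\mathrm{F}_m)$. For $(\mathrm{C}_{m+1})$: given a finite family of definable subsets of $\R^{m+1}$, apply the continuous Cell Decomposition first, so it suffices to subdivide each continuous cell $C$ into $C^k$-cells. Such a $C$ lies over a cell $C'\seq\R^m$ via the coordinate projection and is either $C'\times\R$, the graph of a continuous definable $g\colon C'\to\R$, or a band between continuous definable walls $g<h$ on $C'$. Invoking $(\mathrm{C}_m)$ and $(\mathrm{F}_m)$ for $C'$ and for $g,h$, refine $C'$ into $C^k$-cells on each of which $g$ and $h$ are $C^k$; over each such base the corresponding piece of $C$ is a $C^k$-cell. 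Doing this for every cell of the original decomposition and then replacing everything by a single common refinement --- the same bookkeeping as in the continuous case --- yields $(\mathrm{C}_{m+1})$. The passage to $(\mathrm{F}_{m+1})$ is the parallel fibrewise argument: reduce to functions of one variable on each fiber, using the one-variable lemma together with a parameter version in the spirit of \rp{Prop::monotonicity} to keep the exceptional behavior uniform, then glue via $(\mathrm{C}_{m+1})$.

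\emph{Main obstacle.} There is no new analytic content beyond the one-variable lemma; the difficulty is purely organizational --- arranging the two inductive statements so the loop closes ($(\mathrm{F}_m)$ is what smooths the walls of $(m+1)$-dimensional cells, while $(\mathrm{C}_m)$ is what makes ``piecewise $C^k$'' meaningful), and ensuring that refining the base of one cell does not disturb its neighbors, which forces one to always work with a simultaneous common refinement. Since this is carried out in full in \cite{vdD98} and \cite{vdD99}, in the paper we simply cite the latter.
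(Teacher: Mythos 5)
The paper offers no proof of this proposition: it is quoted directly from \cite{vdD99} (p.~131), so there is no internal argument to compare yours against. Your sketch is the standard proof of $C^k$ cell decomposition --- the same double induction $(\mathrm{C}_m)$, $(\mathrm{F}_m)$ that establishes the continuous version, upgraded by a one-variable differentiability lemma --- and in outline it is exactly what the cited source carries out. The one-variable lemma is correct as you state it; I would only note that the appeal to Lebesgue's a.e.-differentiability of monotone functions can be replaced by a purely o-minimal argument (for a monotone definable $f$ the one-sided difference quotients are definable and monotone in $t$ near $t=0$, hence have limits in $\R\cup\{\pm\infty\}$, and the definable set where these limits disagree or are infinite cannot contain an interval), which keeps the proof inside the structure and is how the reference proceeds.

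The genuine weak point is the clause you dismiss as ``the parallel fibrewise argument'' for $(\mathrm{F}_{m+1})$: that is where the real content of the theorem sits, and as written there is a gap. Knowing that $x_{m+1}\mapsto f(x',x_{m+1})$ is $C^k$ on each fiber, even with the number of exceptional points bounded uniformly in $x'$ via a parameter version of monotonicity, does not yield joint $C^k$-ness of $f$ in all $m+1$ variables --- separate smoothness in each variable does not even imply joint continuity (consider $xy/(x^2+y^2)$ extended by $0$, a semialgebraic function). The correct route, as in \cite{vdD98}, is to show that the definable set of points of a top-dimensional cell at which some partial derivative of order at most $k$ fails to exist or to be continuous has empty interior, hence dimension at most $m$, and then to induct on the dimension of the bad set, decomposing it separately. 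Since you close by deferring to \cite{vdD98} and \cite{vdD99} for the details, the proposal is an adequate justification of the citation the paper makes, but it is not yet a self-contained proof.
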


The dimension of a cell is defined inductively in an obvious way (For precise definition the reader may refer to \cite[p.~50]{vdD98}), accordingly we define the dimension of a tame set as follows:  
\begin{defn} 
 The \emph{dimension} of a non-empty tame set $A \subset \R^m$ is defined by 
 \[ 
 \dim A:= \max_i \{ \dim(\C_i): \text{ $\cup_i \, \C_i$ is a cell decomposition of $A$} \}, 
 \]
 and $\dim(\emptyset) = -\infty$. The \emph{dimension of $A$ at a point $p \in A$} is defined by 
 \[
   \dim_p A:= \max_i \{ \dim (\C_i), \text{ $p \in \ov{\C_i}$ and $\cup_i \, \C_i$ is a cell decomposition of $A$}. \}   
 \] 
\end{defn}

It can be shown that the above definition does not depend on the choice of the partition, and it also agrees with \rd{Def::dim} for analytic sets, if we consider the latter as a definable set in the o-minimal structure $\R_{an}$. 

Now given a cell decomposition $A = \C_1 \cup \cdots \cup \C_k$, it is easy to see that $\pi (A) = \pi(\C_1) \cup \cdots \cup \pi(\C_k)$ is a cell decomposition of $\pi (A)$, where $\pi: \R^n \to \R^{n-1}$ is the natural projection. Hence we immediately get the following result:
\begin{prop} \label{Prop::dim_proj}
 Let $A$ be a definable set and let $\pi: \R^n \to \R^{n-1}$ be the natural projection map. Then $\dim A \geq \dim \pi(A)$. 
\end{prop}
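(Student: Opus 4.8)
The plan is to read the statement off directly from the inductive structure of cells, exactly along the lines indicated in the paragraph preceding it. First I would recall that, by the very definition of a cell, every cell $\C \subseteq \R^n$ with $n \geq 2$ sits over a \emph{base cell} $\C' \subseteq \R^{n-1}$: it is either the graph of a definable continuous function $f\colon \C' \to \R$, or one of the bands $(f,g)$, $(-\infty,f)$, $(f,\infty)$ over $\C'$, or $\C' \times \R$. In each of these cases the coordinate projection $\pi\colon \R^n \to \R^{n-1}$ forgetting the last coordinate satisfies $\pi(\C) = \C'$; hence $\pi(\C)$ is again a cell, and comparing the inductive definitions of dimension one sees $\dim \pi(\C) = \dim \C'$, which equals $\dim \C$ when $\C$ is a graph and $\dim \C - 1$ in the remaining cases. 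In particular $\dim \pi(\C) \leq \dim \C$ for every cell $\C$.

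Next I would apply the Cell Decomposition theorem to the definable set $A$, obtaining a finite partition $A = \C_1 \cup \cdots \cup \C_k$ into cells. Applying $\pi$ gives $\pi(A) = \pi(\C_1) \cup \cdots \cup \pi(\C_k)$, a finite union of cells in $\R^{n-1}$. Using that the dimension of a finite union of definable sets is the maximum of the dimensions of the pieces (a consequence of the cell-decomposition definition of $\dim$, which is independent of the chosen partition), together with the inequality from the previous step, we get
\[
  \dim \pi(A) = \max_{1 \leq i \leq k} \dim \pi(\C_i) \leq \max_{1 \leq i \leq k} \dim \C_i = \dim A,
\]
which is the claim.

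The only point requiring a little care is the identity $\dim\bigl(\bigcup_i \pi(\C_i)\bigr) = \max_i \dim \pi(\C_i)$, since the images $\pi(\C_i)$ need not be pairwise disjoint. The cleanest way around this is to invoke the sharper form of the Cell Decomposition theorem, producing an actual cell decomposition $\mathcal{D}$ of $\R^n$ each of whose cells is contained either in $A$ or in $\R^n \setminus A$: by the inductive definition of a decomposition of $\R^n$, the family $\{\pi(\C) : \C \in \mathcal{D}\}$ is then a cell decomposition of $\R^{n-1}$, so the distinct sets among $\pi(\C_1), \dots, \pi(\C_k)$ form a genuine cell decomposition of $\pi(A)$ and the dimension formula applies verbatim. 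Alternatively one proves $\dim(U \cup V) = \max(\dim U, \dim V)$ for definable $U,V$ directly by passing to a common cell refinement of a decomposition adapted to both. Either route is routine, and this is the only (minor) obstacle; everything else is immediate from the definitions.
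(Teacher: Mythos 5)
Your proof is correct and follows essentially the same route as the paper, which deduces the proposition immediately from the observation that projecting a cell decomposition $A=\C_1\cup\cdots\cup\C_k$ yields cells covering $\pi(A)$ of no larger dimension. You are in fact slightly more careful than the paper: your remark that the images $\pi(\C_i)$ need not be pairwise disjoint, and your fix via a decomposition of $\R^n$ compatible with $A$ (or via $\dim(U\cup V)=\max(\dim U,\dim V)$), addresses a point the paper glosses over when it asserts that the $\pi(\C_i)$ form a cell decomposition of $\pi(A)$.
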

 
 Let $X \subset \R^n$ be a relatively compact subset, i.e. the closure of $X$ is compact in $\R^n$. For any $\ve>0$, we denote by $M(\ve,X)$ the minimal number of closed balls of radius $\ve$ covering $X$.   

\begin{prop}[{\cite[Corollary 5.7]{Y-C_04}}] 
 Let $A \subset \R^n$ be a tame set of dimension $\ell <n$. Then for any ball $B^n_r$ of radius $r$ in $\R^n$, there exists a constant $C= C(A,n)$ such that  
 \[
  M(\ve, A\cap B_r^n) \leq C \left( \left(\frac{r}{\ve}\right)^l + 1 \right). 
 \] 
\end{prop}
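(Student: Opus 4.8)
The plan is to reduce the estimate, by cell decomposition, to covering a single cell, and then to realize each cell as a controlled Lipschitz image of an open subset of a standard $\R^m$. Since $M(\ve,\cdot)$ is subadditive ($M(\ve,X\cup Y)\le M(\ve,X)+M(\ve,Y)$) and a definable set is a finite union of cells, it is enough to prove
\[
M(\ve,\,C\cap B^n_r)\ \lesssim\ (r/\ve)^{\dim C}+1
\]
for each cell $C$ occurring in a decomposition of $A$, with the implied constant allowed to depend on $A$ and $n$; summing over the (finitely many) cells and using $\dim C\le\ell$, together with $M(\ve,A\cap B^n_r)\le 1$ when $r\le\ve$, then yields the Proposition. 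A naive implementation — decompose $A$ by the smooth cell decomposition quoted above and bound each graph cell by the Lipschitz image of its base — fails, because a plain decomposition produces only $C^1$ graphs whose slopes may be unbounded (for instance the graph of $1/x$ over $(0,\infty)$). The remedy is to use instead an $L$-regular (``Lipschitz'') cell decomposition, available for globally subanalytic sets, which is the relevant setting here: $A$ admits a finite partition into definable cells each of which, after a permutation of the ambient coordinates, is the graph $\{(y,G(y)):y\in E\}$ of an $L$-Lipschitz definable map $G\colon E\to\R^{n-m}$ over an open cell $E\subset\R^m$ (with $m=\dim C$), where $L=L(A)$ is fixed. The point of the permuted coordinates is exactly that, while $1/x$ has unbounded slope over the $x$-axis, its graph splits into pieces of slope $\le 1$ once one is also allowed to graph over the $y$-axis.

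Granting such a decomposition, fix a cell $C\subset\R^n$ written as the graph of an $L$-Lipschitz $G\colon E\to\R^{n-m}$ over an open cell $E\subset\R^m$. The map $\psi\colon y\mapsto(y,G(y))$ is Lipschitz with constant depending only on $L$ and $n$, and $\psi$ restricts to a bijection $E\to C$; moreover if $(y,G(y))\in B^n_r$ then $|y|<r$, so $C\cap B^n_r=\psi\bigl(\pi(C\cap B^n_r)\bigr)\subset\psi\bigl(E\cap B^m_r\bigr)\subset\psi\bigl(B^m_r\bigr)$, where $\pi$ is the coordinate projection onto the first $m$ coordinates. Covering $B^m_r$ by $O\bigl((r/\ve)^m+1\bigr)$ balls of radius a fixed fraction of $\ve$ and applying $\psi$ produces a cover of $C\cap B^n_r$ by $O\bigl((r/\ve)^m+1\bigr)$ balls of radius $\ve$. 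Since $m\le\ell$, this is $\lesssim(r/\ve)^\ell+1$ for $r\ge\ve$, and summing over the finitely many cells completes the argument.

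An alternative, and the route actually followed in \cite{Y-C_04}, bounds $M(\ve,A\cap B^n_r)$ through the multidimensional Vitushkin variations $V_0,\dots,V_n$: one has a general integral-geometric inequality of the shape $M(\ve,X)\lesssim\sum_{k=0}^{n}V_k(X)\,\ve^{-k}$, while for a definable set $A$ of dimension $\ell$ each $V_k(A)$ is finite with $V_k\equiv 0$ for $k>\ell$ (a generic $k$-plane projection of such $A$ is $\mathcal H^k$-null with uniformly finite fibres), and $V_k(A\cap B^n_r)\lesssim r^k$ by the $k$-homogeneity of $V_k$ combined with the uniform local conic structure of definable sets. In either approach the main obstacle is the same finiteness input — the existence of a finite $L$-regular decomposition in the first route, the finiteness and polynomial growth of the variations $V_k$ in the second — which ultimately rests on uniform bounds for the number of connected components of the fibres of a definable family (the o-minimal counterpart of Gabrielov's finiteness theorem for subanalytic sets; compare \rp{Prop::monotonicity}). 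Everything beyond that is elementary packing and bookkeeping.
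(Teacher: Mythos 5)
The paper gives no proof of this Proposition at all --- it is quoted from \cite{Y-C_04} as a black box --- so there is no internal argument to compare yours against; I can only assess your sketch on its own merits. As a reduction it is sound: subadditivity of $M(\ve,\cdot)$ plus a finite partition into cells reduces the bound to a single cell, and you correctly identify the one genuine obstruction, namely that the ordinary ($C^k$) cell decomposition quoted in the paper yields graphs with uncontrolled slope, so the Lipschitz-image packing argument requires the $L$-regular decomposition of Kurdyka and Parusi\'nski (finitely many cells, each the graph of an $L$-Lipschitz definable map over an open cell after a linear change of coordinates). That theorem is genuinely nontrivial --- comparable in depth to the statement being proved --- so your first route is a correct proof only modulo importing it; the packing computation itself is fine (minor quibbles: for a ball $B^n_r$ centered at $c\neq\0$ the projection lands in the $m$-ball of radius $r$ about $\pi(c)$, and $\psi(B^m_r)$ should read $\psi(E\cap B^m_r)$ since $\psi$ is only defined on $E$). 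One substantive point in your favor: the constant must depend on $A$ and not only on $n$, exactly as you allow --- a set of $N$ points is tame of dimension $0$ and needs $N$ balls --- so the $C(n)$ in the quoted statement should be read as $C(A,n)$, which is all that \rc{Cor::vol_tame_set} and the proof of \rp{Prop::main_est} actually use. Your second paragraph correctly describes the Vitushkin-variation route taken in \cite{Y-C_04}; either way, the proposal is acceptable as a proof sketch provided the $L$-regular decomposition (respectively, the finiteness and homogeneity of the variations $V_k$) is explicitly acknowledged as the imported input rather than proved.
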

\begin{cor} \label{Cor::vol_tame_set}  
 Let $A \subset \R^n$ be a tame set of dimension $\ell <n$. Denote by $N_\ve(A)$ the $\ve$ neighborhood of $A$. Then
 \[
  |N_\ve(A)| \lesssim \ve^{n-l}, 
 \] 
 where $|\cdot|$ denotes the Lebesgue measure in $\R^n$. 
\end{cor}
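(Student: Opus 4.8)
The plan is to reduce the volume estimate to the covering-number bound from the preceding proposition by a standard measure-theoretic argument. Fix a ball $B_r^n$ and restrict attention to $N_\ve(A) \cap B_r^n$; since $A$ is relatively compact (or we work locally near a fixed point, which suffices for the applications in this paper), the global statement follows by covering $A$ with boundedly many such balls of a fixed radius $r$, and for $\ve$ smaller than $r$ we may absorb constants. So it is enough to show $|N_\ve(A) \cap B_r^n| \lesssim \ve^{n-\ell}$ with the implied constant depending on $n$ and $r$.

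The key observation is that the $\ve$-neighborhood of $A \cap B_r^n$ is contained in the union of the $2\ve$-balls centered at the points of an $\ve$-net for $A \cap B_r^n$. Concretely, let $x_1, \dots, x_M$ be centers of $M = M(\ve, A \cap B_r^n)$ closed balls of radius $\ve$ covering $A \cap B_r^n$. Then any point of $N_\ve(A)\cap B_r^n$ lies within distance $\ve$ of some point of $A$, which in turn lies within $\ve$ of some $x_i$; hence $N_\ve(A) \cap B_r^n \subseteq \bigcup_{i=1}^M B(x_i, 2\ve)$. Taking Lebesgue measure and using $|B(x_i,2\ve)| = c_n (2\ve)^n$ gives
\[
  |N_\ve(A)\cap B_r^n| \leq M(\ve, A\cap B_r^n)\, c_n (2\ve)^n \lesssim \left( \left(\tfrac{r}{\ve}\right)^\ell + 1\right) \ve^n \lesssim \ve^{n-\ell},
\]
where in the last step we used $\ell < n$ so that $\ve^n \lesssim \ve^{n-\ell}$ for $\ve$ bounded, and the term $(r/\ve)^\ell \ve^n = r^\ell \ve^{n-\ell}$ already has the desired form. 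This completes the argument.

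I do not expect a genuine obstacle here: the only points requiring a little care are (i) making sure the ``$N_\ve(A)$'' in the statement is understood with $A$ relatively compact (or the estimate localized), so that the covering-number proposition applies, and (ii) the bookkeeping of which constants are allowed to depend on $n$ and $r$ versus being absorbed. Both are routine. One could alternatively invoke the general principle that Minkowski content bounds follow from box-counting bounds, but writing out the three-line covering argument above is cleaner and self-contained.
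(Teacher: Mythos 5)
Your argument is correct and is essentially identical to the paper's own proof: cover $A$ by $M(\ve, A\cap B_r^n)$ balls of radius $\ve$, note that $N_\ve(A)$ lies in the union of the doubled balls, and multiply the covering-number bound by the volume $c_n(2\ve)^n$. The extra remarks on localization and constant bookkeeping are fine but do not change the route.
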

\begin{proof}
  Let $\La = \{ B(x_i, \ve) \}_{i=1}^M$ be the set of balls covering $A$, where $M \leq C(n) \left( \left(\frac{r}{\ve}\right)^l + 1 \right)$. Then $N_\ve(A)$ is contained in $\cup_{i=1}^M B(x_i, 2\ve) $, and thus $|N_\ve(A)| \lesssim \ve^n (\ve^{-l} + 1) \lesssim \ve^{n-l}$. 
\end{proof}

\subsection{Removable singularity}  
In our proof of \rt{Thm::W11_intro}, we first show that $ |\na \log |f||  = \frac{|\na f|}{|f|} \in L^1_\loc(\0)$. However, as both $\log |f|$ and $\na \log |f|$ are defined off the zero set of $f$, 
we must show that the derivative $\na \log |f|$ in fact exists in the sense of distribution while crossing the singular set $Z_f$, a phenomenon known as removable singularity. We now state such problem in a slightly more general form. Let $P(x, \Om)$ be a linear partial differential operator on an open set $\Om$ in $\R^n$, and let $A$ be a closed subset of $\Om$. Given a class of distributions on $\Fc (\Om)$, we say (following Harvey-Polking \cite{H-P70}) the set $A$ is \emph{removable} for $\Fc(\Om)$ if each $f \in \Fc(\Om)$ that satisfies $P(x,\Om)f = 0$ in $\Om \sm A$ also satisfies $P(x, \Om) f = 0 $ in $\Om$. Questions can then be asked as to what conditions on the coefficients of $P(x,\Om)$ and on the set $A$ will ensure that $A$ is removable for $\Fc(\Om)$. In the remainder of this section, we will prove two results of this type. 

We let $d(x,E)$ to denote the Euclidean distance from the point $x$ to the set $E \subset \R^n$. We denote the $\ve$-neighborhood of $E$ by $E_{\ve} := \{ x \in \R^n: d(x, E) <\ve \}$. 
The unit ball in $\R^n$ is denoted by $\B$, and the punctured ball $\B \sm \{ \0 \}$ by $\B_\ast$.  
We begin with a result for isolated singularity. 
\pr{Prop::rem_sing_1}  
Let $\all$ be a multi-index with $| \all | = m$, for $1 \leq m<n$. Suppose $f \in L^{\frac{n}{n-m}} (\B)$, $g \in L^1 (\B)$, and
  \[ 
     D^{\all} f =g,  \quad (D^{\all} = \pa_{x_1}^{\all_1} \cdots \pa_{x_n}^{\all_n} ) 
  \]
 holds in the sense of distributions in $\B_{\ast}$. Then $D^{\all} f = g$ holds in the sense of distributions in $\B$. 
\epr
\nid \tit{Proof.} 
Define a smooth function $\var \in C^{\infty}_c (\R^n)$ satisfying 
\[
  \var (x) = 
  \begin{cases} 
  0 & \text{if $|x| < 1$};  \\ 
  1 & \text{if $|x| > 2$}.  
  \end{cases}
\]
Let $\var_n (x) := \var(nx)$. Then $\var_n$ satisifes
\[
 \var_n (x) = 
 \begin{cases} 
 0 & \text{if $|x| < \frac{1}{n} $}; \\ 
 1 & \text{if $|x| > \frac{2}{n}$}.   
 \end{cases} 
\]
Let $\psi \in C^{\infty}_c (\B)$, and set $\psi_n := \psi \var_n$. Since $\psi_n \in C^{\infty}_c (\B_\ast)$, by assumption we have 
\eq{ibp} 
  \int_{\B} f (D^\all \psi_n) = (-1)^{m} \int_{\B} g \, \psi_n. 
\eeq
Since $\psi_n$ converges to $\psi$ point-wise in $\B$ and $g \in L^1 (\B)$, by the Dominated Convergence theorem the integral on right-hand side converges to 
\[
  (-1)^m \int_{\B} g \, \psi. 
\]
For the integral on the left-hand side we have
\begin{align} \label{lhs}
 \int_{\B} f (D^\all \psi_n) 
 &= \int_{\B} f D^\all (\psi \var_n) \\ \nn 
 &= \int_{\B} f (D^\all \psi) \var_n + \sum_{ \substack{|\beta| \geq 1 \\ \beta \leq \all}  }  \int_{\B}   f ( D^{\all - \beta} \psi)  (D^{\beta} \var_n).
\end{align}
Since $f \in L^{\frac{n}{n-m}} (\B) \subset L^1(\B)$, we have
\[ 
  \int_{\B} f ( D^\all \psi) \var_n \xrightarrow{\ve \to 0}  \int_{\B} f (D^\all \psi). 
\]
By the definition of $\var_n$, we see that $D^{\beta} \var_n$ is compactly supported in $\{x: |x| < \frac{2}{n} \}$, and 
\[ 
\left| D^{\beta} \var_n (x) \right| 
= n^{|\beta|} \left| \pa_y^{\beta} \var (nx) \right|
\lesssim n^{|\beta| }.  
\] 
Denote by $B(x,r)$ the ball centered at $x$ with radius $r$. By Hölder's inequality,
\begin{align*}
  \int_{\B} \left|  f ( D^{\all - \beta} \psi)  (D^{\beta} \var_n) \right|  
 &\leq C \left( \int_{|x|< \frac{2}{n}  } |f|^{\frac{n}{n-|\beta|}} \right)^{\frac{n-|\beta|}{n}}  \left( \int_{|x| < \frac{2}{n}} \left| \pa_x^{\beta} \var_n \right|^{\frac{n}{|\beta|}} \right)^{\frac{|\beta|}{n}} \\ 
 &\leq  C \| f \|_{L^{\frac{n}{n- |\beta | }} B(\0, \frac{2}{n}) }  \left( \left( \frac{2}{n} \right)^n n^{|\beta| \cdot \frac{n}{|\beta|}} \right)^{\frac{|\beta|}{n}}   \\
 &\leq C \| f \|_{L^{\frac{n}{n- |\beta | }} B(\0, \frac{2}{n}) } . 
\end{align*} 
Since $|\beta| \leq |\all| = m$, we have $\| f \|_{L^{\frac{n}{n-|\beta|}} \left( B(\0, \frac{2}{n})  \right) } \leq \| f \|_{L^{\frac{n}{n- m }} \left( B(\0, \frac{2}{n})  \right) } $ which converges to $0$ as $n \to \infty$. Putting the results together and letting $n \to \infty$ in \re{ibp} and \re{lhs} we get
\[
  \int_{\B} f (D^\all \psi)  = (-1)^m \int_{\B} g  \psi, \quad \psi \in C^{\infty}_c (\B). 
\]
Hence $D^\all f = g $ in the sense of distribution in $\B$. 
\hfill \qed 

Next, we prove a removable singularity result for an arbitrary set of Hausdorff dimension at most $n-2$.
We will need the following construction of smooth cut-off functions due to Harvey and Polking. For a compact set $K$ in $\R^n$, we let $\La_r (K)$ denote the $r$ dimensional Hausdorff measure of $K$. 
\begin{lemma}[Harvey-Polking Lemma] \label{Lem::HP}
  Let $K$ be a compact subset of $\R^n$. Let $l$ and $p'$ be some positive number such that $n - lp' >0$. For each $\ve >0$, there exists some $\chi_{\ve} \in C^{\infty}_c (\R^n)$ with $\chi_{\ve} \equiv 1 $ in a neighborhood of $K$ and $\supp \chi_{\ve} \subset K_{\ve}$. Furthermore, for $ | \all | <  l$, 
 \eq{flest}
 \left| D^\all \chi_\ve \right|_{L^{p'}} \leq C_{\all,n} \ve^{ l - | \all| } \left( \La_{n-lp'}  (K) + \ve \right)^{\frac{1}{p'}}. 
 \eeq
\end{lemma} 
\begin{proof}
    See \cite[Lemma 3.2]{H-P70}. 
\end{proof}
\begin{prop} \label{Prop::rem_sing_2}  
 Let $\Om$ be an open set in $\R^n$ and let $A \subset \Om$ be a compact set of Hausdorff dimension at most $n-2$. Suppose $f \in C^\infty (\Om \sm A) \cap L^p_\loc(\Om)$ for some $p>2$, and also $\na f \in L^1_\loc(\Om)$. Then $\na f$ is the derivative of $f$ in $\Om$ in the sense of distributions. 
\end{prop}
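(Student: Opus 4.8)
The plan is to verify, for each index $j\in\{1,\dots,n\}$ and each $\psi\in C^\infty_c(\Omega)$, the identity
\[
  \int_\Omega f\,\partial_j\psi\,dx = -\int_\Omega u\,\psi\,dx ,
\]
where $u:=\partial_j f$ is the classical derivative of $f$: it is defined a.e.\ on $\Omega$ (since $|A|=0$), coincides with the smooth derivative on $\Omega\setminus A$, and lies in $L^1_{\loc}(\Omega)$ by hypothesis. Following the Harvey--Polking strategy \cite{H-P70}, I would interpose a family of cutoffs that vanish near $A$, integrate by parts on the open set $\Omega\setminus A$ where $f$ is smooth, and let the cutoffs swell to $1$. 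The hypothesis $p>2$ enters precisely here: the conjugate exponent $p':=p/(p-1)$ then satisfies $p'<2\le n$, and this is exactly the range in which a set of Hausdorff dimension $\le n-2$ admits gradient cutoffs that are small in $L^{p'}$ (equivalently, has vanishing $W^{1,p'}$-capacity).

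The main step is the construction of these cutoffs. Given $\eta>0$, put $s:=n-p'$; then $s>n-2\ge\dim_H A$, so $\mathcal{H}^{s}(A)=0$ and, for every $\delta>0$, $A$ is covered by finitely many balls $B(x_i,r_i)$ with $r_i<\delta$ and $\sum_i r_i^{\,s}<\eta$. After passing to a subcover of bounded overlap (Besicovitch) I would take $\chi_\delta:=\prod_i\phi_i\in C^\infty(\Omega)$, where $\phi_i$ is a standard radial bump with $\phi_i\equiv 0$ on $B(x_i,r_i)$, $\phi_i\equiv 1$ off $B(x_i,2r_i)$, and $|\nabla\phi_i|\lesssim r_i^{-1}$. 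Then $0\le\chi_\delta\le 1$, $\chi_\delta\equiv 0$ on a neighborhood of $A$, $\chi_\delta\equiv 1$ outside an open set $U_\delta$ with $|U_\delta|\lesssim\sum_i r_i^{\,n}\to 0$, and, since $|\nabla\chi_\delta|\lesssim\sum_i r_i^{-1}\mathbf{1}_{B(x_i,2r_i)}$,
\[
  \|\nabla\chi_\delta\|_{L^{p'}(\Omega)}^{p'}\lesssim\sum_i r_i^{-p'}\,|B(x_i,2r_i)|\lesssim\sum_i r_i^{\,n-p'}=\sum_i r_i^{\,s}<\eta ,
\]
so $\|\nabla\chi_\delta\|_{L^{p'}}\to 0$ as $\delta\to 0$. (When $A$ is tame --- the only case needed in the sequel, with $A=Z_f$ --- one may instead take $\chi_\delta$ adapted to $N_\delta(A)$ and read $|N_\delta(A)|\lesssim\delta^{2}$ off \rc{Cor::vol_tame_set}, whence $\|\nabla\chi_\delta\|_{L^{p'}}^{p'}\lesssim\delta^{-p'}\delta^{2}\to 0$.)

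Granting the cutoffs, the rest is routine. Since $\psi\chi_\delta\in C^\infty_c(\Omega\setminus A)$ and $f$ is smooth there, integration by parts gives $\int_\Omega f\,\partial_j(\psi\chi_\delta)=-\int_\Omega u\,\psi\chi_\delta$; expanding $\partial_j(\psi\chi_\delta)=\chi_\delta\,\partial_j\psi+\psi\,\partial_j\chi_\delta$ turns this into
\[
  \int_\Omega f\,\chi_\delta\,\partial_j\psi\,dx = -\int_\Omega u\,\psi\,\chi_\delta\,dx - \int_\Omega f\,\psi\,\partial_j\chi_\delta\,dx .
\]
Letting $\delta\to 0$: on the left, $\bigl|\int_\Omega f\,\partial_j\psi\,(1-\chi_\delta)\,dx\bigr|\le\int_{U_\delta}|f\,\partial_j\psi|\to 0$ because $f\,\partial_j\psi\in L^1(\Omega)$ (as $f\in L^p_{\loc}\subset L^1_{\loc}$) and $|U_\delta|\to 0$; likewise the first term on the right tends to $-\int_\Omega u\,\psi$ since $u\,\psi\in L^1(\Omega)$; and the last term is handled by Hölder's inequality,
\[
  \Bigl|\int_\Omega f\,\psi\,\partial_j\chi_\delta\,dx\Bigr|\le\|\psi\|_{\infty}\,\|f\|_{L^p(\operatorname{supp}\psi)}\,\|\nabla\chi_\delta\|_{L^{p'}(\Omega)}\longrightarrow 0 ,
\]
which is exactly where $p>2$ is used. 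Combining the three limits yields the desired identity, and since $j$ and $\psi$ were arbitrary this says $\nabla f$ is the distributional derivative of $f$ on $\Omega$. The obstacle I expect to require care is the cutoff construction --- recognizing that $p>2$ is imposed solely to force $p'<2$, so that a set of Hausdorff dimension $\le n-2$ has vanishing $W^{1,p'}$-capacity --- together with the covering bookkeeping (the bounded-overlap refinement and the arithmetic $r_i^{-p'}\cdot r_i^{\,n}=r_i^{\,n-p'}$); everything after that is integration by parts away from $A$ and dominated convergence.
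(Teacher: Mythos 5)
Your argument is correct and is essentially the paper's proof: both construct cutoff functions vanishing near $A$ whose gradients are small in $L^{p'}$ with $p'=p/(p-1)<2$ (possible precisely because $\dim_H A\le n-2$ forces $\mathcal{H}^{\,n-p'}(A)=0$), integrate by parts on $\Om\setminus A$ where $f$ is smooth, and kill the commutator term $\int_\Om f\,\psi\,\partial_j\chi_\delta$ by H\"older against $\|\nabla\chi_\delta\|_{L^{p'}}$. The paper gets the cutoffs from the Harvey--Polking lemma, proved in its appendix with dyadic cubes and a telescoping partition of unity --- which is exactly the ``covering bookkeeping'' you flag as the delicate point, since bounded overlap of the balls $B(x_i,r_i)$ does not by itself control the overlap of the doubled balls across scales --- but the bound $\|\nabla\chi_\delta\|_{L^{p'}}^{p'}\lesssim\sum_i r_i^{\,n-p'}$ you assert is precisely what that lemma delivers, so the two proofs coincide in substance.
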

\begin{proof}
By \rl{Lem::HP}, there exists a family of cut-off functions $ \{ \wti \chi_\ve \}_{\ve>0} \in C^\infty_c(\Om)$ such that $0 \leq \wti \chi_\ve \leq 1 $, $\wti \chi_\ve \equiv 1$ in a neighborhood of $A$, and supp $\wti \chi_\ve \seq A_\ve$. Furthermore, the following estimate holds
\begin{equation} \label{chi_ve_grad_est}   
  \left| \na \wti \chi_\ve \right|_{L^{p'}(\Om)} \leq C_{\all,n} \ve^{ l - 1 } \left( \La_{n-lp'}  (A) + \ve \right)^{\frac{1}{p'}}, \quad n-lp' > 0. 
\end{equation} 
Define $ \chi_\ve := 1 - \wti \chi_\ve$, so that $\chi_\ve \equiv 0$ near $A$ and $\chi_\ve \equiv 1$ in $\Om \sm A_\ve$. Moreover, $\chi_\ve$ converges to $1$ point-wise in $\Om \sm A$ as $\ve \to 0$, and $\chi_\ve$ satisfies estimate \re{chi_ve_grad_est}.     
For $\phi \in C^\infty_c(\Om)$, we can integrate by parts on the domain $\Om \cap \{ \chi_\ve \geq 0 \}$ to get
\begin{equation} \label{ibp_cutoff}  
  \int_{\Om} (\na f) \chi_\ve \phi = - \int_{\Om} f \na (\chi_\ve \phi) = - \int_{\Om} f (\na \chi_\ve) \phi - \int_{\Om} f \chi_\ve \na \phi.  
\end{equation} 
Since $\na f \in L^1_\loc(\Om)$, by the Dominated Convergence theorem, the integral on the left converges to $\int_{\Om} (\na f) \phi$ as $\ve \to 0$. Similarly, the second integral on the right converges to $\int_{\Om} f \na \phi$. By H\"older's inequality with $\frac{1}{p} + \frac{1}{p'} =1$, we get 
\begin{align*}
  \int_{\Om} \left| f (\na \chi_\ve) \phi \right| 
  &\lesssim  \| f \|_{L^p(\Om)} \| \na \chi_\ve \|_{L^{p'}(\Om)}
  \\ &\leq C_{\all,n} \ve^{l- 1} \left( \La_{n-lp'} (A) + \ve \right)^{\frac{1}{p'}} \| f \|_{L^p(\Om)}. 
\end{align*} 
Since $p'<2$ ($p>2$), we can choose $l>1$ such that $lp'<2$, so $n-lp'> n-2$. By the assumption $\dim A \leq n-2$, we have $\La_{n-lp'}(A)=0$, and therefore
\[
 \int_{\Om \sm A} \left| f (\na \chi_\ve) \phi \right| 
 \leq C_{\all,n} \ve^{l-1+\frac{1}{p'}} \| f \|_{L^p(\Om)}. 
\] 
In particular the integral converges to $0$ as $\ve \to 0$. Taking the limit in \re{ibp_cutoff} as $\ve \to 0$, we get
\[
  \int_\Om (\na f) \phi = - \int_\Om f (\na \phi), 
  \quad \phi \in C^\infty_c(\Om). 
\] 
In other words, $\na f$ is the derivative of $f$ in $\Om$ in the sense of distributions. 
\end{proof}
\section{Proof of the Main Theorem}  
In this section we prove the main result of the paper, \rt{Thm::W11_intro}.

\pr{Prop::logLp}  
Let $f$ be the germ of a real analytic function at $\0$ in $\R^n$ and suppose $f(\0)=0$. Then $ \log |f| \in L^{p}_{\loc}(\0) $, for any $0 < p < \infty$. 
\epr
\begin{proof}
By Weierstrass Preparation theorem for real analytic functions (\cite[Theorem 6.1.3]{K-P02}, one can find a real analytic coordinate $x = (x_1, \dots, x_n)$ defined in some cube with length $\del$ centered at $\0$ such that
\eq{Weierstrass_1} 
  f (x) = u (x) \left( x_n^m + a_1(x') x_n^{m-1} + \cdots + a_{n-1}(x') x_n + a_n (x') \right), \quad x'=(x_1, \dots, x_{n-1}).   
\eeq
Here $m$ is the vanishing order of $f$ in the $x_n$ direction, $u(\0) \neq 0$ and $a_j$ are real analytic functions in $n-1$ variables with $a_j (\0') = 0$. We can rewrite \re{Weierstrass_1} as 
\[
  f(x) = u (x) \prod_{j=1}^{m} ( x_n -  \xi_j (x') ),  \quad x \in [-\del, \del]^n, 
\]
where the roots $\xi_j (x')$ are either real or complex numbers. 
Assuming that $|f|< 1$ in $[-\del, \del]^n$, we have 
\begin{align*}
\left| \log |f (x)  | \right| 
&= \log \frac{1}{|f (x) |}  
= \log \frac{1}{|u (x) |} + \sum_{j=1}^m \log \frac{1}{| x_n - \xi_j (x') | }, 
\quad  x\in [-\del, \del]^n.  
\end{align*}
Then
\[
  \left| \log |f (x) | \right|^p 
  \leq C_p \left( \left| \log \frac{1}{|u (x) |}  \right| ^p 
  + \left| \sum_{j=1}^m  \log \frac{1}{|x_n - \xi_j (x') | } \right|^p \right) ,  \quad x\in [-\del, \del]^n. 
\] 
Hence
\[ 
  \int_{[-\del, \del]^n }  \left| \log |f(x)| \right|^p \, dV(x) \lesssim \int_{[-\del, \del]^n } \left| \log \frac{1}{|u(x)|} \right|^p \, dV(x)
  + \int_{[-\del, \del]^n } \left|  \sum_{j=1}^m  \log \frac{1}{|x_n - \xi_j (x') | } \right|^p  \, dV(x).  
\] 
The first integral is bounded since $u$ does not vanish near $\0$. For the second integral, we apply Fubini's theorem to get 
\begin{align*}
    \int_{[-\del, \del]^n } \left| \sum_{j=1}^m \log \frac{1}{|x_n - \xi_j (x') | } \right|^p \, dV(x)
&= \int_{[-\del, \del]^{n-1} } \left( \int_{[-\del, \del]} \left| \sum_{j=1}^m \log \frac{1}{|x_n - \xi_j (x') | } \right|^p \, dx_n\right) \, d x'
\\ 
&\lesssim C_p \int_{[-\del, \del]^{n-1}} \left( \sum_{j=1}^m \int_{[-\del,\del]} \left| \log \frac{1}{|x_n - \xi_j (x') | } \right|^p \, dx_n \right) \, dx'. 
\end{align*}
By the condition on the coefficients $a_j(x')$, we know that
$|a_{m-2}(x')| = \left| \sum_{j \neq k} \xi_j(x') \xi_k (x') \right| $ and $a^2_{m-1}(x') = \left( \sum_{j=1}^n \xi_j (x') \right)^2 $ are small when $x'$ is close to $0$. By writing 
\[
  \sum_{j=1}^n (\xi_j(x'))^2 = \left( \sum_{j=1}^n \xi_j(x')  \right)^2 - 2 \sum_{j\neq k} \xi_j(x') \xi_k(x'),   
\]
we see that if $x' \in [-\del,\del]^{n-1}$, then $\sum_{j=1}^n |\xi_j(x')|^2 \leq a_{m-1}^2(x') + 2 |a_{m-2}(x')| \leq c_0$ for some $c_0>0$.           
Consequently, the inner integral is bounded by (up to a constant multiple independent of $x'$), 
\[ 
   \int_{ [-\del, \del] } \left( \log \frac{1}{|x_n| } \right)^p  \, d x_n. 
\] 
Since $\log \frac{1}{|x_n|} \leq C_{\ve} |x_n|^{-\ve} $, for any positive $\ve > 0$, we see that for each $0<p<\infty$, the above integral is bounded by some constant $C_p$ which is uniform in $x'\in [-\del,\del]$. This shows that $\log |f|$ is in  $L^p_{\loc}(\0)$ for any $0<p< \infty$. 
\end{proof} 
In what follows we fix the o-minimal structure $\R_{an}$. Given the germ of a real analytic function $f$ at the origin, we may assume without loss of generality that $f$ is real analytic in an open set $U$ containing the cube $[-1,1]^n$. We define the restricted function 
\eq{wti_f}
  \wti f(x) = \begin{cases} 
  f(x), \quad \text{for $x \in [-1,1]^n$};   
  \\ 1, \quad \text{otherwise.} 
  \end{cases} 
\eeq
\begin{lemma}
 $Z_{\wti f}$ is definable in $\R_{an}$. 
\end{lemma}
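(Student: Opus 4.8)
The plan is to recognize that $\wti f$ is (essentially by construction) a restricted analytic function, so its graph lies in $\Sc(\R_{an})$ by the very definition of that structure, and then to read off definability of $Z_{\wti f}$ from the elementary closure properties of definable sets. Concretely, I would first record that since $\wti f \equiv 1$ off the cube $[-1,1]^n$, we have $Z_{\wti f} \seq [-1,1]^n$ and in fact $Z_{\wti f} = \{ x \in [-1,1]^n : f(x) = 0 \}$, because for $x$ in the cube $\wti f(x) = f(x)$.

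Next I would introduce the restricted analytic function $g_0 : \R^n \to \R$ attached to the standing data (an open $U \supseteq [-1,1]^n$ and an analytic $f : U \to \R$), namely $g_0 = f$ on $[-1,1]^n$ and $g_0 = 0$ elsewhere. By definition of $\Sc(\R_{an})$ as the smallest structure containing the graphs of all restricted analytic functions, the graph $\Gm(g_0)$ is definable, hence $g_0$ is a definable map. Therefore $Z_{g_0} = g_0^{-1}(\{0\})$ is definable (a preimage of the semialgebraic set $\{0\}$ under a definable map), and since the cube $[-1,1]^n$ is semialgebraic, hence definable, the set
\[
  Z_{\wti f} \;=\; Z_{g_0} \cap [-1,1]^n
\]
is definable as a finite intersection of definable sets. (Alternatively, since the paper's notion of restricted analytic function permits an arbitrary constant value outside the cube, $\wti f$ itself already qualifies, and one may argue directly that $\Gm(\wti f) \in \Sc(\R_{an})$ and hence $Z_{\wti f} = \pi\big(\Gm(\wti f) \cap (\R^n \times \{0\})\big)$ is definable.)

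I do not expect any genuine obstacle here: the entire argument is bookkeeping with the definition of $\Sc(\R_{an})$ and the standard facts that definable sets are closed under intersection and under taking preimages/projections. The only points warranting a line of care are (i) matching the paper's convention for "restricted analytic function" (constant outside the cube) with the choices of $g_0$ and $\wti f$, and (ii) recalling that the standing reduction — $f$ analytic on an open neighborhood of the \emph{closed} cube $[-1,1]^n$ — is legitimate after rescaling the real analytic coordinate so that the original domain of analyticity contains $[-1,1]^n$. The substantive input, Gabrielov's theorem that $\Sc(\R_{an})$ is an o-minimal structure, is used only as a black box and is not re-proved.
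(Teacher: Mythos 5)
Your proposal is correct and is essentially the paper's own argument: the paper likewise observes that the graph of $\wti f$ is definable in $\R_{an}$ (as the graph of a restricted analytic function) and obtains $Z_{\wti f}$ by intersecting it with $\{z_{n+1}=0\}$, which is exactly your parenthetical alternative. Your primary variant with $g_0$ and the intersection with the cube is an equivalent bookkeeping choice, not a different method.
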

\begin{proof}
 We can write $Z_{\wti f}$ as the intersection of the graph of $\wti f$ 
 \[ 
 \{ (x_1,\dots,x_n,x_{n+1}) \in \R^{n+1}: \wti f(x_1,\dots,x_n) = x_{n+1} \}
 \] and the set $\{ x_{n+1}=0 \}$ in $\R^{n+1}$, both of which are definable in $\R_{an}$.   
\end{proof} 

\begin{prop}  \label{Prop::main_est} 
  Let $f$ be the germ of a real analytic function at the origin in $\R^n$, $n\geq 2$. Suppose $\codim_\0 Z_f \geq 2$. Then there exists a small neighborhood $U$ of $\0$ such that 
 \[ 
  \int_{U} \left|  \na \log |f(x)| \right| \, dV(x) < C(f, n), 
 \]
 where the constant $C(f,n)$ depends only on $f$ and the dimension $n$.  
\end{prop}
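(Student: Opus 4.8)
The plan is to bound $\int_U\frac{|\na f|}{|f|}\,dV$ by slicing $U$ into segments parallel to a coordinate axis, identifying each resulting one–dimensional integral as the total variation of $\log|f|$ along that segment, and controlling these variations \emph{uniformly in the transverse variable} using the o-minimal monotonicity theorem with parameters (\rp{Prop::monotonicity}), the {\L}ojasiewicz distance inequality (\rp{Prop::L_dist_ineq}), and the tube–volume bound (\rc{Cor::vol_tame_set}). First I would reduce to one direction: since $\frac{|\na f|}{|f|}\le\sum_{i=1}^{n}\frac{|\pa_{x_i}f|}{|f|}$ off $Z_f$, it suffices, after a permutation of coordinates, to bound $\int_U\frac{|\pa_{x_n}f|}{|f|}\,dV$. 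Taking a representative of $f$ real analytic on a neighbourhood of $[-1,1]^n$ with restriction $\wti f$ definable in $\R_{an}$ (so $Z_{\wti f}$, hence $Z_f$ near $\0$, is definable), I would then shrink $U=[-\del,\del]^n$ so small that $|f|\le M_0$ on $U$, the inequality $|f(x)|\ge c\,\dist(x,Z_f)^{\all}$ of \rp{Prop::L_dist_ineq} holds on $U$, and — this is the only place the hypothesis $\codim_\0 Z_f\ge2$ enters — $\dim(Z_f\cap U)\le n-2$. Writing $x=(x',x_n)$ and $U'=[-\del,\del]^{n-1}$, Tonelli's theorem (the integrand is nonnegative and defined off the null set $Z_f$) gives
\[
 \int_U\frac{|\pa_{x_n}f(x)|}{|f(x)|}\,dV(x)=\int_{U'}g(x')\,dx',\qquad g(x'):=\int_{-\del}^{\del}\frac{|\pa_t f(x',t)|}{|f(x',t)|}\,dt .
\]

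Next I would treat the inner integral. Let $\pi:\R^n\to\R^{n-1}$ be $(x',x_n)\mapsto x'$ and $E:=\pi(Z_f\cap U)$; by \rp{Prop::dim_proj}, $E$ is a tame subset of $\R^{n-1}$ with $\dim E\le\dim(Z_f\cap U)\le n-2<n-1$, so $E$ has Lebesgue measure zero. For $x'\in U'\setminus E$ the slice $t\mapsto f(x',t)$ is real analytic, of constant sign, and nowhere zero on $[-\del,\del]$, so $\phi_{x'}:=\log|f(x',\cdot)|$ is $C^1$ there and $g(x')=\operatorname{TV}_{[-\del,\del]}(\phi_{x'})$. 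Applying the parameter monotonicity theorem \rp{Prop::monotonicity} to $\wti f:\R^{n-1}\times\R\to\R$ bounds the number of monotonicity intervals of $f(x',\cdot)$ — hence of $\phi_{x'}$ — on $[-\del,\del]$ by some $N_0=N_0(f)$ independent of $x'$; evaluating $\phi_{x'}$ at its breakpoints and at $\pm\del$ and using $|f|\le M_0$,
\[
 g(x')\le C(f)\,\|\phi_{x'}\|_{L^\infty[-\del,\del]}\le C(f)\Bigl(1+\log^{+}\tfrac{1}{m(x')}\Bigr),\qquad m(x'):=\min_{[-\del,\del]}|f(x',\cdot)|>0 .
\]
Since $\pi$ is $1$-Lipschitz, $\dist((x',t),Z_f)\ge\dist(x',E)$, so \rp{Prop::L_dist_ineq} yields $m(x')\ge c\,\dist(x',E)^{\all}$, and hence (after shrinking $\del$ so $\dist(\cdot,E)<1$ on $U'$, using $\0\in E$) $\log^{+}\tfrac1{m(x')}\ls 1+\log\tfrac1{\dist(x',E)}$.

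It then remains to integrate in $x'$. By the layer–cake formula and $\dist(\cdot,E)<1$ on $U'$,
\[
 \int_{U'}\log\frac{1}{\dist(x',E)}\,dx'=\int_0^{\infty}\bigl|\{x'\in U':\dist(x',E)<e^{-\la}\}\bigr|\,d\la\le\int_0^{\infty}\bigl|N_{e^{-\la}}(E)\cap U'\bigr|\,d\la ,
\]
which is finite since \rc{Cor::vol_tame_set} (with $\dim E\le n-2$) gives $|N_{\eps}(E)\cap U'|\ls\eps^{(n-1)-\dim E}\le\eps$. Combining the displays bounds $\int_U\frac{|\pa_{x_n}f|}{|f|}\,dV$ by a constant $C(f,n)$, and summing over the $n$ coordinate directions proves the proposition.

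The hard part is not a single estimate but the structural reason the slicing works: along an individual line $\na\log|f|$ need \emph{not} be integrable (this is exactly \rp{Prop::1-dim}, visible already for $f=xy$), so the argument can only succeed because the exceptional slices project into $\pi(Z_f)$, a tame set of dimension $\le n-2$ — which simultaneously forces the hypothesis $\codim_\0 Z_f\ge2$ and, through \rc{Cor::vol_tame_set}, makes $\int_{U'}\log\frac1{\dist(x',E)}\,dx'$ finite. The rest is o-minimal bookkeeping: extracting the uniform count $N_0$ from \rp{Prop::monotonicity}, verifying that $E$ is tame of the asserted dimension, and justifying the local reductions (shrinking $U$ so that $\dim(Z_f\cap U)\le n-2$ and that the {\L}ojasiewicz inequality holds there).
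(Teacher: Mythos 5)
Your proposal is correct and follows essentially the same route as the paper's proof: reduce to a single partial derivative, bound the one-dimensional total variation of $\log|f|$ along slices by a uniform monotonicity count from \rp{Prop::monotonicity} together with $\log(1/\inf|f|)$, and then integrate that logarithm over the transverse variable using the {\L}ojasiewicz inequality and the tube-volume bound \rc{Cor::vol_tame_set} applied to the projection of $Z_f$. The only (immaterial) differences are that you apply the monotonicity theorem to $f(x',\cdot)$ rather than to $\pa_1\log|f|$ and use a layer-cake integral where the paper uses a dyadic decomposition into the sets $E_j$.
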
 
\begin{proof}
In view of the above remark (\ref{wti_f}), it suffices to show that 
\[
 \int_{[-1,1]^n \sm Z_f} \left| \pa_{x_1} \log |f| \right| \, dV(x) =  \int_{[-1,1]^n} \left| \pa_{x_1} \log |f| \right| \, dV(x) \leq C(f,n).
\]  
Fix $x'=(x_2,\dots, x_n)$ and write 
\[
 \pa_{x_1} \log |f(x_1,x')| = \frac{\pa_{x_1} |f(x_1,x')|}{|f(x_1,x')|} = \frac{\sign (f) (\pa_{x_1} f)}{|f(x)|} 
 := g_{x'}(x_1).   
\]
By \rp{Prop::monotonicity}, $g_{x'}$ changes sign on $[-1,1]$ by a finite number of times $M$ which is uniform in $x'\in [-1,1]^{n-1}$. Now for each fixed $x'$, we break up the interval $[-1,1]$ into at most $M$ subintervals $[a_i(x'),b_i(x')]$, on each of which $g_{x'}$ has the same sign. Then 
\begin{align*}
 \int_{[-1,1]^n } \left|\pa_{x_1} \log |f| \right| \, dV(x) &= \int_{[-1,1]^{n-1}} \int_0^1 |\pa_{x_1} \log |f| | \, dx_1 \, dx' 
 = \sum_{i=1}^M \int_{[-1,1]^{n-1}} \int_{a_i(x')}^{b_i(x')} |\pa_{x_1} \log |f| | \, dx_1 \, dx'
 \\ &= \sum_{i=1}^M \int_{[-1,1]^{n-1}} \left| \int_{a_i(x')}^{b_i(x')} \pa_{x_1} \log |f| \, dx_1 \right| \, dx'
 \\ &= \sum_{i=1}^M \int_{[-1,1]^{n-1}} \left| \log |f(b_i(x'), x')| - \log |f(a_i(x'), x')| \right| \, dx' 
 \\ &\leq \sum_{i=1}^M \int_{[-1,1]^{n-1}} |\log |f(b_i(x'),x')|| + |\log |f(a_i(x'), x')| | \, dx' . 
\end{align*}
The integrand is bounded by a constant multiple of $|\log \inf_{ [-1,1]} |f(\cdot, x')|| + |\log \sup_{[-1,1]} |f(\cdot, x')|| $. We can assume that $\sup_{x \in [-1,1]^n} |f(x)| \leq 1 $, so 
\eq{x1_pd_int_est} 
  \int_{[-1,1]^n} \left|\pa_{x_1} \log |f| \right| \, dV(x) \lesssim 1+ \int_{[-1,1]^{n-1}} | \log \inf_{[-1,1]} |f (\cdot, x')| \, dx'. 
\eeq 

We now break up $[-1,1]^{n-1}$ into dyadic regions of the form
\[
 E_j := \{ x' \in [-1,1]^{n-1}: \inf_{[-1,1]} |f(\cdot,x')| \in (2^{-j-1}, 2^{-j}) \}, \quad j=0,1,2,\dots.   
\] 
We claim that $E_\infty = \{ x' \in [-1,1]^{n-1}: \inf_{[-1,1]} |f(\cdot,x')| = 0 \}$ has measure $0$ with respect to the Lebesgue measure of $\R^{n-1}$. Indeed, $E_\infty$ is the image of the zero set $Z_f$ under the projection map $\pi: \R^n \to \R^{n-1}$. Hence by \rp{Prop::dim_proj}, we have 
\[
  \dim(E_\infty) \leq \dim (Z_f) \leq n-2, 
\] 
which implies the claim. 
Now, if $y' \in E_j$, by definition there exists some $y_1 \in [-1,1]$ such that $|f(y_1,y')| \in [2^{-j-1}, 2^{-j}] $. Denote $y =(y_1,y')$ and let $y_\ast \in Z_f$ be such that $\dist(y, Z_f) = |y -y_\ast|$. Denote by $y_\ast'$ the last $n-1$ coordinates of $y_\ast$. Then
\[
  \dist(y,Z_f) =|y - y_\ast| 
  \geq |y' - y_\ast'| \geq \dist(y', E_\infty), 
\] 
where we used the fact that $y_\ast' \in E_\infty$. By \rp{Prop::L_dist_ineq}, there exists some $\all > 0$ such that  
\[
 2^{-j} \geq |f(y)| \geq c_f \blb \dist(y,Z_f) \brb^\all \geq c_f 
 \blb \dist(y', E_\infty) \brb^\all. 
\] 
Hence $\dist (y',E_\infty) \leq C_f (2^{-j})^{\frac{1}{\all}}$ for all $y' \in E_j$, which implies that $E_j$ lies in a $C_f (2^{-j})^{\frac{1}{\all}}$ neighborhood of $E_\infty$. By \rc{Cor::vol_tame_set}, $|E_j| \lesssim (2^{-j})^{\frac{\codim Z_f}{\all}} \leq (2^{-j})^{\frac{2}{\all}}$. It then follows from \re{x1_pd_int_est} that 
\begin{align*}
 \int_{[-1,1]^n} \left|\pa_{x_1} \log |f| \right| \, dV(x) &\lesssim 1+ \sum_{j=0}^\infty \int_{E_j} | \log 2^{-j}| \, dx' \lesssim 1+ \sum_{j=0}^\infty (2^{-j})^{\frac{2}{\all}} |\log 2^{-j}| < \infty. \qedhere
\end{align*} 
\end{proof} 
 
\nid \tit{Proof of \rt{Thm::W11_intro} and \rc{Cor::W11_glob_intro}}. 

Combining \rp{Prop::logLp}, \rp{Prop::main_est} and \rp{Prop::rem_sing_2}, we obtain \rt{Thm::W11_intro}. To prove \rc{Cor::W11_glob_intro}, note that the assumption implies that the dimension of $Z_f$ at each point is at least 2. We now cover $\ov U$ by finitely many balls. Furthermore, we can shrink the balls so that
 \begin{enumerate}
     \item  
   For any given ball $B(p,r)$ with center $p$ and $f(p) \neq 0$, we have that $\ov{B(p,r)}  \cap Z_f = \emptyset$; 
    \item 
   For any given ball $B(p,r)$ with center $p$ and $f(p) = 0$, $\log |f| \in W^{1,1}(B(p,r))$, 
 \end{enumerate} 
 where (2) holds because of \rt{Thm::W11_intro}. The conclusion then follows easily.

In order to prove \rt{Thm::blow-up_intro}, we 
first show that the integral blows up when the exponent is equal to the dimension of the ambient space.    
\begin{prop} \label{Prop::n_blup}  
 Let $f$ be the germ of a real analytic function at the origin in $\R^n$ with $f(\0)= 0$. Then for every (sufficiently small) neighborhood $\Uc$ of $\0$ 
\[ 
 \int_{\Uc} \left| \frac{ \na f }{f} \right|^n \, d V(x)= \infty. 
\] 
\end{prop}  
\begin{proof}
We prove by contradiction. Without loss of generality we can assume $f$ is real analytic in $\Uc = B_\ve(\0)$, a ball of radius $\ve$ centered at $\0$. Suppose 
\eq{qtfinite}  
   \int_{\Uc} \left| \frac{ \na f }{f} \right|^n \, dV(x) < \infty. 
\eeq
Define
\[
  A(x) = 
  \begin{cases} 
  \left| \frac{\na f (x)}{f (x)}  \right|,  & f (x) \neq 0;  \\
  0, & f (x)=0. 
  \end{cases} 
\]  
Then $A \in L^n (B_{\ve} (\0))$. Using spherical coordinate $ x = \rho  \om$, $\om \in S^{n-1}$, we have
\eq{Vdint}
  \int_{B_{\ve} (\0)} A^n(x) \, d V(x) = \int_{S^{n-1}} \int_0^{\ve}  A^n( \rho \om) \rho^{n-1} \, d \rho \, d \si (\om),   
\eeq
where we denote by $d \si (\om)$ the area element on the $n-1$ dimensional unit sphere $S^{n-1} \subset \R^n$.  
By \re{qtfinite}, the integral in \re{Vdint} is finite. Hence Fubini's theorem implies that, for almost all $\om_0 \in S^{n-1}$, 
\[ 
   \int_{0}^{\ve} A^n (\rho \om_0) \rho^{\rho -1} \, d \rho < \infty, 
\] 
or 
\eq{vrld} 
A (\rho \om_0) \rho^{\frac{n-1}{n}} \in L^n (0, \ve).    
\eeq 
Fix $\om_0 \in S^{n-1}$ such that \re{vrld} holds and also $f(\rho \om_0)$ is not identically $0$ for $\rho \in (0,\ve)$. Define $\var_{\om_0} (\rho) := f(\rho \om_0)$, with $\var_{\om_0} (0) = 0$. By choosing $\ve$ small, we have $\var_{\om_0}(\rho) \neq 0 $ if $\rho \in (0,\ve) \sm \{ 0 \}$ ($\var$ is a one variable real analytic function and thus have isolated zero at $0$.) Taking derivative, 
\[
  |\var'(\rho) | = \left| \na f (\rho \om_0) \cdot \om_0 \right|  \leq  \left| \na f (\rho \om_0) \right|.  
\]
By definition of $A$, we have for $\rho \neq 0$, 
\begin{align*} 
|\var'(\rho) | &\leq | \na f (\rho \om_0) |  \\
&= A (\rho \om_0) | f ( \rho \om_0) |   \\
&= A (\rho \om_0) |\var( \rho) |   \\
 &= A (\rho \om_0) \rho^{\frac{n-1}{n}}  |\var(\rho) |  \rho^{\frac{1-n}{n}}. 
\end{align*}
Hence
\[
   \rho^{\frac{n-1}{n}} \frac{|\var'( \rho) |}{|\var ( \rho) | } \leq  A (\rho \om_0)   \rho^{\frac{n-1}{n}}, \quad \rho \neq 0. 
\] 
Since $\var (\rho)$ is real analytic and is not identically $0$, it has finite order vanishing, i.e. there exists $N$ such that 
\begin{gather*}
  \var (\rho) = \rho^N + O(\rho^{N+1}); \\
  \var'(\rho) = N \rho^{N-1} + O(\rho^N), 
\end{gather*}
and $\frac{\var'(\rho)}{\var(\rho)} \approx \frac{1}{\rho}$ for $\rho$ small. 
It follows that
\begin{align*}
 A(\rho \om_0)  \rho^{\frac{n-1}{n}}  \geq \rho^{\frac{n-1}{n}} \left| \frac{\var'(\rho)}{\var(\rho)} \right| 
 \approx \rho^{\frac{n-1}{n}} \rho^{-1}
 = \rho^{-\frac{1}{n}} \notin L^n(0, \ve), 
\end{align*}
which contradicts with \re{vrld}.   
\end{proof}  

We are now ready to prove \rt{Thm::blow-up_intro}, which gives a refinement of the above result based on information about codimension of the zero set. 
\begin{thm} 
  Let $f$ be the germ of a real analytic function at the origin in $\R^n$ with $f(\0) = 0$. Suppose $\codim_\0 (Z_f) = n-d$. Then for every (sufficiently small) neighborhood $\Uc$ of $\0$, 
  \[
     \int_{\Uc} \left| \frac{\na f}{ f} \right|^{n-d} \, dV = \infty.   
  \] 
\end{thm}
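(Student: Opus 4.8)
The plan is to reduce the statement to the top-dimensional case already proved in \rt{Thm::n_blup}, by slicing near a $d$-dimensional smooth point of $Z_f$. We may assume $f \not\equiv 0$ (otherwise $\codim_\0(Z_f)=0$), so $n-d \geq 1$. Since $\codim_\0(Z_f) = n-d$ is the same as $\dim_\0(Z_f) = d$, by \rd{Def::dim} the origin lies in the closure of the set of smooth points of $Z_f$ of dimension $d$; hence \emph{every} neighborhood $\Uc$ of $\0$ contains a point $p$ at which $Z_f$ is, locally, a $d$-dimensional analytic submanifold of $\R^n$. After an orthogonal change of coordinates (which leaves both Lebesgue measure and $|\na f|$ unchanged) we may assume the tangent space $T_p Z_f$ is the coordinate subspace $\{0\}^{n-d}\times\R^d$; writing $x=(y,z)\in\R^{n-d}\times\R^d$, the analytic implicit function theorem then produces, inside the neighborhood of $p$ on which $Z_f$ is a manifold and inside $\Uc$, a product box $Q=B\times V$ with $B\subset\R^{n-d}$ and $V\subset\R^d$ open boxes, such that $Z_f\cap Q = \{(\psi(z),z):z\in V\}$ for some real analytic $\psi:V\to B$, and such that $f$ vanishes on $Q$ \emph{only} on this graph.

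Next I would slice. For each fixed $z_0\in V$, set $h_{z_0}(y):=f(y,z_0)$, a real analytic function of $y$ on $B$; by the last property of $Q$ it vanishes at $y=\psi(z_0)$ and nowhere else on $B$, so in particular it is not identically $0$ and vanishes at the interior point $\psi(z_0)$. Applying \rt{Thm::n_blup} in dimension $n-d$, recentered at $\psi(z_0)$, gives $\int_B |\na h_{z_0}/h_{z_0}|^{n-d}\,dy = \infty$. Since $\na h_{z_0}(y)=\na_y f(y,z_0)$ and $|\na_y f|\le|\na f|$ pointwise, and since the slice $B\times\{z_0\}$ meets $Z_f$ only in the single point $(\psi(z_0),z_0)$, it follows that
\[
  \int_B \left|\frac{\na f(y,z_0)}{f(y,z_0)}\right|^{n-d}\,dy = \infty \qquad\text{for every } z_0\in V.
\]

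Finally I would integrate over $z_0\in V$. The function $(y,z)\mapsto|\na f/f|^{n-d}$ is continuous on $\Uc\sm Z_f$ and $Z_f$ has Lebesgue measure zero, so it is a nonnegative measurable function on $Q$, and Tonelli's theorem yields
\[
  \int_\Uc \left|\frac{\na f}{f}\right|^{n-d}\,dV \;\ge\; \int_Q \left|\frac{\na f}{f}\right|^{n-d}\,dV \;=\; \int_V \left(\int_B \left|\frac{\na f(y,z_0)}{f(y,z_0)}\right|^{n-d}\,dy\right) dz_0 \;=\; \int_V (+\infty)\,dz_0 \;=\; +\infty,
\]
since the box $V$ has positive measure. (When $d=0$ this argument degenerates to a direct application of \rt{Thm::n_blup}.) Because the integral only decreases under shrinking of $\Uc$, the conclusion in fact holds for \emph{every} neighborhood $\Uc$ of $\0$.

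The only step that is not routine is the construction of the box $Q$: it is essential that $p$ be a smooth point of $Z_f$ of dimension exactly $d$, so that near $p$ the zero set is \emph{exactly} a $d$-dimensional manifold, with no higher-dimensional strata entering the box. That is precisely what guarantees each slice restriction $h_{z_0}$ is not identically zero, which is what makes the reduction to \rt{Thm::n_blup} legitimate; without it — for instance when $d \geq n-d$ and one slices through a generic point of $Z_f$ — a slice could lie entirely in $Z_f$. The orthogonal reduction, the pointwise bound $|\na_y f|\le|\na f|$, and the application of Tonelli are all standard.
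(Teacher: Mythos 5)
Your proposal is correct and follows essentially the same route as the paper: locate a smooth $d$-dimensional point of $Z_f$ near the origin, slice transversally in the $(n-d)$-dimensional directions, apply \rt{Thm::n_blup} to each slice, and conclude by Fubini/Tonelli. The only differences are cosmetic — you argue directly via Tonelli with $Z_f$ written as a graph after an orthogonal rotation (which is in fact slightly more careful about preserving Lebesgue measure and $|\na f|$ than the paper's unspecified flattening coordinate change), whereas the paper phrases the same computation as a proof by contradiction.
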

\begin{proof}
By definition, for any $\ve >0$ there exists a smooth point $x_0$ of $Z_f$ in $\Uc $ such that $\dim_{x_0} (Z_f) = d$. Then one can find a coordinate system $(y',y'') \in \R^d \times \R^{n-d}$, with $y'=(y_1, \cdots, y_d)$, $y''=(y_{d+1}, \cdots y_n)$, in a small neighborhood $\mc{W} $ of $x_0$ such that $\mc{W} \subset \Uc$ and
\[
  Z_f \cap \mc{W} = \{ y_{d+1} = \cdots = y_n = 0 \}. 
\]  
Here, $y = ( y_1, \cdots, y_d )$ is a local coordinate chart of $Z_f $ in $\mc{W}$. Assume for the moment that $\int_{\mc{W}} \left| \frac{\na f}{f} \right|^{n-d}\, dV < \infty $. Let $f_{y'} (y'') = f (y', y'')$ be the restriction of $f$ onto the $y''$-plane.
For each fixed $y'$, the function $f_{y'}: U' \subset \R^{n-d} \to \R$ satisfies $f_{y'}(\0) = 0$. By Fubini's theorem,
\[ 
  \infty > \int_{\mc{W}} \left| \frac{\na_y f (y)}{f(y)} \right|^{n-d} \, dV(y) 
  =  \int \int \left| \frac{\na f (y', y'')}{f(y', y'')} \right|^{n-d} \, dy'' \, dy' 
  \geq \int \int \left| \frac{(\na f_{y'}) (y'') }{f_{y'} (y'')}\right|^{n-d} \, dy'' \, dy'.   
\] 
Now the integral on the right-hand side is infinity by \rp{Prop::n_blup}, which is contradiction. Therefore we conclude that $\int_{\mc{W}} \left| \frac{\na f}{f} \right|^{n-d} \, dV = \infty$ and
\[ 
    \int_{\Uc} \left| \frac{\na f}{f} \right|^{n-d} \, dV  > \int_{\mc{W}} \left| \frac{\na f}{f} \right|^{n-d} \, dV = \infty. \qedhere
\] 
\end{proof}

For functions of one variable, the differential inequality 
$|df/dx| \leq V|f|$ is very rigid, as shown by the following result. The proof is suggested to us by Yifei Pan. 
\begin{prop} \label{Prop::1-dim} 
  Let $f$ be a real-valued continuous function on $(-1,1)$. Suppose $f(0) = 0$, $f \in W^{1,p}(-1,1)$, $p\geq 1$, and $|df/dx| \leq V |f|$ for $V \in L^1(-1,1)$. Then $f \equiv 0$ on $(-1,1)$. 
\end{prop}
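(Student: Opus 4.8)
The plan is to reduce the statement to a Gronwall-type estimate and then globalize by a connectedness argument. First, since $f\in W^{1,p}(-1,1)$ with $p\geq 1$, the weak derivative $f'$ lies in $L^1_{\loc}(-1,1)$, and a one-variable Sobolev function with a continuous representative is locally absolutely continuous; as $f$ is assumed continuous, the fundamental theorem of calculus gives $f(x)=\int_0^x f'(t)\,dt$ for all $x\in(-1,1)$, using $f(0)=0$. Combining this with the hypothesis $|f'(t)|\leq V(t)|f(t)|$ a.e. yields, for $x\in[0,1)$,
\[
  |f(x)|\leq \int_0^x V(t)\,|f(t)|\,dt,
\]
and symmetrically $|f(x)|\leq \int_x^0 V(t)\,|f(t)|\,dt$ for $x\in(-1,0]$.

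Next I would run a local bootstrap step on $[0,1)$ (the case $(-1,0]$ being identical). Fix $a$ with $0\leq a<1$ and suppose $f\equiv 0$ on $[0,a]$ (the case $a=0$ is the hypothesis). Because $V\in L^1(-1,1)$, the quantity $\int_a^{a+\delta}V$ tends to $0$ as $\delta\to 0^+$, so we may choose $\delta>0$ with $\int_a^{a+\delta}V<\tfrac12$. Put $M:=\sup_{[a,a+\delta]}|f|$, which is finite by continuity. Using $f\equiv 0$ on $[0,a]$, the displayed inequality gives $|f(x)|\leq M\int_a^{a+\delta}V<\tfrac12 M$ for every $x\in[a,a+\delta]$; taking the supremum over $x$ yields $M\leq\tfrac12 M$, hence $M=0$, i.e. $f\equiv 0$ on $[a,a+\delta]$.

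Finally I would globalize: let $b:=\sup\{a\in[0,1):f\equiv 0\text{ on }[0,a]\}$. This set is nonempty by the bootstrap step applied with $a=0$, and it is an interval containing $0$; if $b<1$ then $f(b)=0$ by continuity, and applying the bootstrap step at $a=b$ contradicts the definition of $b$. Hence $b=1$, so $f\equiv 0$ on $[0,1)$, and the same reasoning on $(-1,0]$ gives $f\equiv 0$ on $(-1,1)$. The only points that need a word of care are the absolute-continuity/FTC identity for Sobolev functions in one variable and the fact that $\int_a^{a+\delta}V\to 0$ as $\delta\to 0^+$, which is precisely where the $L^1$ integrability of $V$ (rather than merely $L^p$ for some $p>1$) enters; otherwise the argument is completely elementary and presents no real obstacle. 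Alternatively, one may simply invoke Gronwall's inequality with an $L^1$ coefficient directly on the displayed integral inequality with zero initial value to conclude $f\equiv 0$ at once.
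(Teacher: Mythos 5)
Your proof is correct and follows essentially the same route as the paper: reduce to the integral inequality $|f(x)|\leq\int_0^x V|f|$ via the one-dimensional FTC for Sobolev functions, then exploit the absolute continuity of $\int V$ (which is where $V\in L^1$ enters) in a Gronwall-type contraction. The only difference is cosmetic: the paper contracts the quantity $\int_0^\ve V|f|$ against itself while you contract the sup norm, and you spell out the continuation step from a neighborhood of $0$ to all of $(-1,1)$, which the paper leaves implicit.
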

\begin{proof}
 By \cite[Theorem 8.2]{Br11}, 
\[
 f(x) = f(0) + \int_0^x f'(t) \, dt
 = \int_0^x f'(t) \, dt. 
\] 
By replacing $V$ with $V+1$, we can assume that $V \geq 1$ on $(-1,1)$. 
It follows that for $\ve>0$,  
\begin{align*} 
   \int_0^\ve V(x) |f(x)| \, dx 
   &\leq \int_0^\ve V(x) \left( \int_0^x |f'(t)| \, dt \right) \, dx  
\\ &\leq \left( \int_0^\ve V(x) \, dx  \right) \left( \int_0^\ve V(t) |f(t)| \, dt \right).  
\end{align*}
 If $f$ it not identically $0$ near the origin, then $\int_0^\ve V(x) |f(x)| \neq 0$. Hence 
 \[
   1 \leq \int_0^\ve V(x) \, dx.  
 \] 
 Since $V \in L^1(-1,1)$, the right-hand side goes to $0$ as $\ve \to 0$, which is a contradiction. Hence $f$ must be identically $0$.   
\end{proof}

The following example shows that the integrability exponents in all of our results are sharp. 

\ex{Ex::sharp} 
Let $f$ be a polynomial in $\R^n$, $n \geq 2$, of the form
 \[ 
    f(x) = x_1^{2 r_1} + x_2^{2 r_2} + \cdots + x_k^{2 r_k},  \quad k \geq 2, 
 \]   
 where $r_k$ are integers satisfying $1 \leq r_1 \leq r_2 \dots \leq r_k$. 
 Then for any bounded neighborhood $U$ of $\0$, the following holds
\eq{exam_int_cvg}   
   \int_U \left| \frac{\na f}{f}\right|^{\gm}  \, dV < \infty 
\eeq
if and only if 
\[
  \gm <  1 + \frac{r_1}{r_2} + \cdots + \frac{r_1}{r_k}. 
\] 
\eex
\begin{proof} 
We have, up to a nonzero constant factor,  	
\[
\left| \frac{ \na f (x) }{f (x)} \right| \approx \frac{\sqrt{|x_1|^{2 (2r_1- 1)} +  \dots + |x_n|^{2 (2 r_k- 1)} }}{|x_1|^{2 r_1} + \dots + |x_n|^{2 r_k}}.  
\]	
Set $y_1 = |x_1|^{r_1}, \dots, y_k = |x_k|^{r_k}$. Then $|y_1| = y_1^{\frac{1}{r_1}}, \dots, |y_k| = y_k^{\frac{1}{r_k}}$, and
\[
\left| \frac{ \na f (x) }{f (x)} \right| = \frac{ \sqrt{y_1^{4 - \frac{2}{r_1}} + \dots + y_k^{4- \frac{2}{r_k}} } }{y_1^2 + \dots + y_k^2}. 
\] 
Writing $y= (y',y'')$ where $y'= (y_1, \dots, y_k)$ and $y''=(y_{k+1}, \dots, y_n)$, the integral becomes
\eq{eheadcoc1} 
\int_U \left| \frac{\na f (x) }{f(x)} \right|^{\gm} \, dV(x) 
= \int_{U''} \int_{U'_{y''}} \frac{ \left( y_1^{4 - \frac{2}{r_1}} + \dots + y_k^{4- \frac{2}{r_k}} \right)^{\frac{\gm}{2}} } { \left( y_1^2 + \dots + y_k^2 \right)^{\gm} } \frac{y_1^{\frac{1}{r_1} -1} \dots y_k^{\frac{1}{r_k} -1} }{r_1 \dots r_k} \, dy' \, dy'', 
\eeq 
where we denote by $U''$ the projection of $U$ onto the $y''$ space and $U'_{y''} := \{ y' \in \R^n: (y',y'') \in U \}$.    
Using spherical coordinate $y = \rho  \om$ with $\rho \in (0, 1)$ and $\om = (\om_1, \dots, \om_k) \in S^{k-1}$, we get 
\begin{align*}
\frac{ \left( y_1^{4 - \frac{2}{r_1}} + \dots + y_k^{4- \frac{2}{r_k}} \right)^{\frac{\gm}{2}} } { \left( y_1^2 + \dots + y_k^2 \right)^{\gm} }
&= \frac{ \left( (\rho \om_1)^{4 - \frac{2}{r_1} } + \dots + (\rho \om_k)^{4 - \frac{2}{r_k} }  \right)^{\frac{\gm}{2}} }{\rho^{2 \gm}} \\ 
&= \frac{ \left( \rho^{4 - \frac{2}{r_1}} \left(  \om_1^{4- \frac{2}{r_1}}  + \rho^{\all_2} \om_2^{4- \frac{2}{r_2}}   \dots + \rho^{\all_k} \om_k^{4 -  \frac{2}{r_k}}  \right) \right)^{\frac{\gm}{2}} }{\rho^{2 \gm}}  \\ 
&= \rho^{- \frac{\gm}{r_1}} \left( \om_1^{4 - \frac{2}{r_1}} + \rho^{\all_2} \om_2^{4 - \frac{2}{r_2}}  \dots + \rho^{\all_k} \om_k^{4 - \frac{2}{r_k}} \right)^{\frac{\gm}{2}}, 
\end{align*}
where we set 
\[
\all_2 = \frac{2}{r_1} - \frac{2}{r_2}, \quad \dots, \quad \all_k = \frac{2}{r_1} - \frac{2}{r_k}.  
\]
Hence up to a positive constant factor, the inner integral in \re{eheadcoc1} is bounded by 
\eq{eheadcoc2} 
\int_{\rho =0}^{1}  \int_{S^{k-1}} \frac{\left( \om_1^{4 - \frac{2}{r_1}} + \rho^{\all_2} \om_2^{4 - \frac{2}{r_2}}  \dots + \rho^{\all_k} \om_k^{4 - \frac{2}{r_k}} \right)^{\frac{\gm}{2}}  }{ \om_1^{1- \frac{1}{r_1}} \dots \om_k^{1- \frac{1}{r_k}}  }   \,d\si (\om) \, \left( \rho^{- \frac{\gm}{r_1}}  \rho^{\frac{1}{r_1} -1} \dots \rho^{\frac{1}{r_k} -1} \rho^{k-1} \right) \, d \rho, 
\eeq 
where $d \si(\om)$ denotes the area element on $S^{k-1}$.  
The integral over the sphere is bounded up to a constant factor by 
\eq{sphere_int}
  \int_{S^{k-1}} \frac{\om_1^{2 \gm - \frac{\gm}{r_1}} + \rho^{\frac{\all_2 \gm}{2}} \om_2^{2 \gm-  \frac{\gm}{r_2}} + \dots + \rho^{\frac{\all_k \gm}{2}} \om_k^{ 2 \gm - \frac{\gm}{r_k} }  }{ \om_1^{1- \frac{1}{r_1}} \dots \om_k^{1- \frac{1}{r_k}} }  \, d \si(\om). 
\eeq
By using the standard parametrization 
\[
  (\om_1, \dots, \om_k) 
  = ( \cos \thh_1, \, \sin \thh_1 \cos \thh_2, \, \sin \thh_1 \sin \thh_2 \cos \thh_3, \, \dots, \left( \prod_{i=1}^{k-2} \sin \thh_i \right) \cos \thh_\ell, \, \prod_{i=1}^{k-1} \sin \thh_i), 
\] 
where $\thh_1, \dots, \thh_{k-2} \in [0, \pi]$ and $\thh_{k-1} \in [0, 2\pi]$, and the formula for surface area element 
\[ 
  d \si(v) = (\sin^{k-2} \thh_1) (\sin^{k-3} \thh_2) \cdots \sin \thh_{\ell-1} \, d \thh_1 \cdots d \thh_\ell,  
\] 
we see that the integral \re{sphere_int} is bounded for any $\gm>0$. Hence the integral \re{eheadcoc2} is bounded if and only if 
\begin{align*}
-1 &< - \frac{\gm}{r_1} + \frac{1}{r_1} - 1 + \dots + \frac{1}{r_k} - 1 + k - 1 \\ 
&= - \frac{\gm}{r_1} + \frac{1}{r_1} + \dots + \frac{1}{r_k} -1, 
\end{align*}
or 
\[ 
\gm < 1 + \frac{r_1}{r_2} + \dots + \frac{r_1}{r_k}.  \qedhere
\] 
\end{proof}
\begin{rem}
 By choosing suitable values for $r_1, \cdots, r_k$, we see that for every $\ve>0$, there exists a polynomial $f$ such that $|\na f| / f \notin L^{1+\ve}(\0)$. This shows that $L^1$ integrability of $|\na \log |f||$ is the best possible in \rt{Thm::W11_intro}. On the other hand, for $g(x) = x_1^2 + x_2^2 + \cdots + x_k^2$, by the above computation we have $|\na g| /g \in L^p_\loc(\0)$, for any $p <k$ with $k= \codim_{\0}(Z_g)$, but $|\na g| /g  \notin L^k_\loc(\0)$. This proves the optimality of the exponent in \rt{Thm::blow-up_intro}. 
\end{rem} 

\bibliographystyle{amsalpha}

\bibliography{reference}  

\end{document}